\newtheorem{thm}{Theorem}[section]
\newtheorem{cor}[thm]{Corollary}
\newtheorem{lem}[thm]{Lemma}
\newtheorem{notation}[thm]{Notation}
\newtheorem{prop-def}[thm]{Proposition-Definition}
\theoremstyle{definition}
\theoremstyle{remark}
\newtheorem{rmk}[thm]{\bf Remark}
\numberwithin{equation}{section}
\def\N{\mathbb{N}}
\def\S{\mathbb{S}}
\def\Ker{{\rm Ker}}
\newcommand{\lr}[1]{\langle #1 \rangle}
\newcommand{\ucr}[1]{\underset{#1}{\circ}}
\newcommand{\up}[1]{{^{#1}{\it \!\Upsilon}}}
\def \ot{\otimes}
\def \As{\!u\mathcal{A}ss}
\def \nias{\!\mathcal{A}ss}
\def \I{\mathcal{I}}
\def \ip{{\mathcal{P}}}
\def \ias{\!u\mathcal{A}ss}
\def\gkdim{\operatorname{GKdim}}
\def \Ker{\operatorname{Ker}}
\def \Id{\operatorname{Id}}
\def \1{\mathbbm 1}
\def \k{\Bbbk}
\def \N{\mathbb{N}}
\def \S{\mathbb{S}}
\def \deg{\operatorname{deg}}
\begin{document}
\title{Ideals of the associative algebra operad}

\author{Y.-H. Bao}
\address{(Bao) School of Mathematical Sciences, Anhui University, Hefei 230601, China}
\email{baoyh@ahu.edu.cn}

\author{J.-N. Xu}
\address{(Xu) School of Mathematical Sciences, Anhui University, Hefei 230601, China}
\email{jnxu1@outlook.com}

\author{Y. Ye}
\address{(Ye) School of Mathematical Sciences,
	University of Sciences and Technology of China, Hefei 230026, China}
\email{yeyu@ustc.edu.cn}

\author{J.J. Zhang}
\address{(Zhang) Department of Mathematics, Box 354350,
University of Washington, Seattle, Washington 98195, USA}
\email{zhang@math.washington.edu}

\author{Y.-F. Zhang}
\address{(Zhang) School of Mathematical Sciences, Anhui University,
Hefei 230601, China}
\email{yfzhang9804@163.com}

\subjclass[2010]{18M60, 16R10}

%16R10 (1991-now) T-ideals, identities, varieties of associative rings and algebras
%16R99 (1991-now) None of the above, but in this section
%18M60 (2020-now) Operads (general)

\keywords{Operad, operadic ideal, PI-algebra, T-ideal,}

\begin{abstract}
We prove a one-to-one correspondence between the operadic 
ideals of the operad $\As$ and $T$-ideals. As a consequence,
we show that $\As$ is noetherian and that every proper 
operadic ideal of $\ias$ is generated by a single element.
\end{abstract}

\maketitle

\section{Introduction}
\label{xxsec1}

Throughout let $\Bbbk$ be a base field of characteristic zero.
Most algebraic objects are over $\Bbbk$. Unless otherwise stated 
we consider associative algebras with unit in this paper. A 
\textit{polynomial} \textit{identity} of an algebra $A$ is a 
noncommutative polynomial $f(x_1, \cdots, x_n)$ such that 
$f(a_1, \cdots, a_n)=0$ for all $a_1, \cdots, a_n\in A$. An 
algebra satisfying a nontrivial polynomial identity is called a 
\textit{PI-algebra}. Commutative algebras, the matrix algebra 
over a commutative algebra, finite-dimensional algebras, and 
Grassmann (or exterior) algebras are examples of PI-algebras. 
%The PI-theory originated from the work of Dehn \cite{De} and 
%Wagner \cite{Wa}. Many significant developments of the PI-theory 
%can be found in \cite{KR, Ka, Ja, Pr1, Ro}. Some newer results 
%concerning PI-algebras were recorded in \cite{AGPR, DF}.

Polynomial identities of a given PI-algebra were firstly 
investigated by Amitsur and Levisky in \cite{AL} where they 
proved that the standard polynomial of degree $2m$ is an 
identity of minimal degree for the $m\times m$ full matrix 
algebra.  It is well-known that the set of all identities 
satisfied by a PI-algebra is a T-ideal of the free algebra 
$\Bbbk\langle X\rangle$ in countable indeterminants 
$X:=\{x_i\}_{n\in \N_+}$. 

Let $\ias$ (resp. $\nias$) denote the symmetric operad encoding 
the unital associative algebras (resp. the associative algebras 
without unit). It is well-known that the PI-theory such as the 
study of multilinear polynomial identities is related to ideals 
of the operad $\ias$. One motivation of this paper is to spell 
out explicitly some connections between the operad $\ias$ and 
the PI-theory.

Observe that the subspace of a T-ideal consisting of all
multilinear polynomials is essentially equivalent to an
operadic ideal of $\ias$ [Lemma \ref{xxlem2.3}]. Recall
that $\As(n)= \Bbbk \S_n$ for all $n\geq 0$. Let $V_n$ be the
space consisting of all multilinear polynomials in $n$ variables
$x_1, \cdots, x_n$. Clearly, $V_n$ admits an action of the
symmetric group $\S_n$, which is naturally isomorphic to the
regular representation of $\S_n$. Let $A$ be a PI-algebra and 
$V_n(A)$ the subspace of $V_n$ of those polynomials that are 
identities of $A$. Then we have $V_n(A)\cong \I_A(n)$ 
where $\I_A$ is the operadic ideal of $\As$ determined by 
the algebra $A$ [Lemma \ref{xxlem2.3}]. As a consequence,
$V_n/V_n(A)\cong (\ias/\I_A)(n)$. In this situation, we also 
say that $A$ is a PI-algebra associated to the operadic ideal 
$\I_A$. Given a T-ideal $J$ and let $A:=\Bbbk\lr{X}/J$. Then, 
following the above procedure we can construct the associated 
operadic ideal $\I_A$ of $\ias$, denoted by $\Psi(J)$. 
Conversely, for every nonzero operadic ideal $\I$ of 
$\ias$, a $\ias/\I$-algebra is a PI-algebra since each nonzero 
element in $\I$ gives an identity of $A$.  In particular, for 
any vector space $V$, the free $\ias/\I$-algebra $(\ias/\I)(V)$ 
is a PI-algebra associated to $\I$. 

\begin{thm}[Theorem \ref{xxthm2.9}]
\label{xxthm1.1}
There is a natural one-to-one correspondence
$$
\{{\text{proper T-ideals of $\Bbbk\lr{X}$}}\}
\longleftrightarrow \{{\text{proper operadic ideals of $\As$}}\}$$
via the map $J \mapsto \Psi(J)$.
\end{thm}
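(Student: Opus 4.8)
The plan is to produce an explicit two-sided inverse $\Phi$ of $\Psi$ and to recognize the operadic-ideal axioms as a coordinate-free restatement of the closure properties of the multilinear components of a $T$-ideal. I use throughout the $\S_n$-equivariant isomorphism $\As(n)=\Bbbk\S_n\cong V_n$ carrying $\sigma\in\S_n$ to the monomial $x_{\sigma(1)}\cdots x_{\sigma(n)}$; under it the operadic partial composition $f\circ_i g$ is substitution of the multilinear polynomial $g$, written in fresh variables, for the $i$-th variable of $f$. For a proper operadic ideal $\I$ of $\As$, I define $\Phi(\I)$ to be the $T$-ideal of $\Bbbk\lr{X}$ generated by all the multilinear polynomials corresponding to $\bigcup_{n}\I(n)$ under this isomorphism.

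The technical core is a single \emph{dictionary lemma}: for any set $S$ of multilinear polynomials, the degree-$n$ multilinear component of the $T$-ideal $\lr{S}_T$ it generates equals, inside $V_n\cong\As(n)$, the degree-$n$ component of the operadic ideal $(S)$ generated by $S$. The inclusion $(S)(n)\subseteq \lr{S}_T\cap V_n$ is clear, because the three generating moves of an operadic ideal are special cases of $T$-ideal operations: the $\S_n$-action is renaming of variables, the composition $f\circ_i g$ with $f\in S$ is a multilinear substitution, and the composition $f\circ_i g$ with $g\in S$ places a generator inside an ambient product. For the reverse inclusion one writes $\lr{S}_T$ as the two-sided ideal generated by all substitution instances $\phi(f)$ with $f\in S$, and then extracts the multihomogeneous component of multidegree $(1,\dots,1)$ from a general element $\sum_i u_i\,\phi_i(f_i)\,v_i$. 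Because each $f_i$ is already multilinear, this component is a sum of terms in which each slot of $f_i$, as well as the factors $u_i$ and $v_i$, is specialized to a multilinear monomial in a disjoint block of variables; every such term is visibly an iterated operadic composition of $f_i$ with permutations, hence lies in $(S)$. Notably this lemma is characteristic-free.

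Granting the dictionary lemma, the composite $\Psi\circ\Phi$ is the identity with no further hypothesis: for a proper operadic ideal $\I$, the $T$-ideal $\Phi(\I)$ has degree-$n$ multilinear part equal to the degree-$n$ component of the operadic ideal generated by $\bigcup_m\I(m)$, which is $\I(n)$ since $\I$ is already operadically closed, and Lemma \ref{xxlem2.3} then identifies $\Psi(\Phi(\I))$ with $\I$. For the composite $\Phi\circ\Psi$, Lemma \ref{xxlem2.3} gives $\Psi(J)(n)=V_n(A)=J\cap V_n$ for $A=\Bbbk\lr{X}/J$, so $\Phi(\Psi(J))$ is the $T$-ideal generated by the multilinear parts $\{J\cap V_n\}_n$; this equals $J$ precisely by the classical characteristic-zero theorem that every polynomial identity is a consequence of multilinear ones, i.e.\ that a $T$-ideal is generated by its multilinear elements. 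Finally, properness matches on both sides: $\I=\As$ iff $\id\in\I(1)$ (compose the unary identity with any operation), which holds iff the variable $x_1\in\Phi(\I)$, i.e.\ iff $\Phi(\I)=\Bbbk\lr{X}$.

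I expect the main obstacle to be twofold. The conceptual one is setting up the dictionary lemma so that the reverse inclusion genuinely lands in the operadic ideal $(S)$ rather than merely in $\lr{S}_T$; the multilinearity of the generators is what makes the multidegree-$(1,\dots,1)$ extraction collapse to honest operadic compositions, and this must be checked with care. The second is the single place where characteristic zero is indispensable, namely the reconstruction $J=\lr{J\cap V_\bullet}_T$ used for $\Phi\circ\Psi=\id$; I would invoke the standard multilinearization (polarization) process rather than reprove it, but it is essential to pin down that this is the only step requiring $\mathrm{char}\,\Bbbk=0$.
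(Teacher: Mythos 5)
Your proposal is correct and follows essentially the same route as the paper: your ``dictionary lemma'' (extracting the multidegree-$(1,\dots,1)$ component of substitution instances of multilinear generators and recognizing the resulting terms as operadic compositions) is exactly the computation in the paper's proof that $\Psi(\Omega(\I))\subseteq\I$, and your reconstruction $J=\lr{J\cap V_\bullet}_T$ via polarization is the paper's use of Lemma \ref{xxlem2.6}. The only notable difference is that the paper invokes Corollary \ref{xxcor2.7} (and hence Kemer's theorem) for the inclusion $J\subseteq\Omega(\Psi(J))$, whereas you correctly observe that only characteristic-zero multilinearization is needed there, so the correspondence itself is independent of Kemer.
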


Theorem \ref{xxthm1.1} says that there is no essential difference 
between T-ideals of $\Bbbk\lr{X}$ and operadic ideals of $\ias$. In 
\cite{BYZ}, the authors studied the ideal structure of 2-unitary
operads similar to $\ias$. Note that $\ias$ is denoted as $\nias$
in \cite{BYZ}. Recall that an operad $\ip$ is \textit{artinian} 
(resp. \textit{noetherian}) if the set of ideals of $\ip$ 
satisfies the descending (resp. ascending) chain condition. Let 
$\ip$ be a locally finite 2-unitary operad. Then
\begin{center}
$\gkdim\ip<\infty \iff \ip$ is artinian $\Longrightarrow \ip$
is noetherian.
\end{center}

It is easily seen that $\ias$ is not artinian since
$\gkdim(\ias)=\infty$. Applying Kemer's theorem 
[Theorem \ref{xxthm2.2}] and the 
relationship between T-ideals and operadic ideals of $\ias$ 
[Theorem \ref{xxthm1.1}], we obtain the following.

\begin{thm}
\label{xxthm1.2}
\begin{enumerate}
\item[(1)]
The operad \; $\ias$ is noetherian.
\item[(2)]
Every proper operadic ideal of $\ias$ is generated by a single
element.
\end{enumerate}
\end{thm}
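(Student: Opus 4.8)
The plan is to derive both parts from the order isomorphism of Theorem~\ref{xxthm1.1} together with Kemer's theorem [Theorem~\ref{xxthm2.2}], moving the relevant chain condition from $T$-ideals to operadic ideals. For part~(1) I would first check that $J\mapsto\Psi(J)$ is inclusion-preserving: if $J_1\subseteq J_2$ are proper $T$-ideals, the surjection $\Bbbk\lr{X}/J_1\twoheadrightarrow\Bbbk\lr{X}/J_2$ makes every identity of the source an identity of the target, so $\Psi(J_1)(n)=J_1\cap V_n\subseteq J_2\cap V_n=\Psi(J_2)(n)$ for all $n$; conversely, since in characteristic zero a $T$-ideal is recovered from its multilinear components (cf. Lemma~\ref{xxlem2.3}), the inverse map is inclusion-preserving as well. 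Hence $\Psi$ is an isomorphism of posets between proper $T$-ideals of $\Bbbk\lr{X}$ and proper operadic ideals of $\As$. By definition $\As$ is noetherian iff the poset of all its ideals satisfies the ascending chain condition, and appending the top element $\As$ to a chain is harmless; so it suffices to have ACC on proper operadic ideals, which by the poset isomorphism is equivalent to ACC on proper $T$-ideals. The latter is precisely the Specht property furnished by Kemer's theorem, proving~(1).

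For part~(2) I would argue directly inside $\As$. By~(1) the operad is noetherian, so a proper operadic ideal $\I$ has finitely many generators $f_1,\dots,f_k$ with $f_i\in\As(n_i)=\Bbbk\S_{n_i}$. The key preliminary observation is that every element of a proper ideal has vanishing augmentation: writing $g=\sum_{\sigma\in\S_q}c_\sigma\,\sigma\in\I(q)$ and composing with the nullary unit $u\in\As(0)=\Bbbk$ in all $q$ inputs gives $\gamma(g;u,\dots,u)=\varepsilon(g)\,u\in\I(0)$, where $\varepsilon(g):=\sum_\sigma c_\sigma$; if $\varepsilon(g)\neq 0$ then $u\in\I(0)$, whence $\id=\gamma(\mu;u,\id)\in\I(1)$ and $\I=\As$, a contradiction. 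Thus $\varepsilon(f_i)=0$ for every $i$.

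It remains to collapse the generators to one. Writing $\mu\in\As(2)$ for the product and $\mu_m\in\As(m)$ for the $m$-fold product, given two generators $f\in\As(p)$ and $g\in\As(q)$ I would form the single element of arity $p+q$
\[
h\;=\;\gamma(\mu;f,\mu_q)+\gamma(\mu;\mu_p,g)\;=\;f(x_1,\dots,x_p)\,x_{p+1}\cdots x_{p+q}+x_1\cdots x_p\,g(x_{p+1},\dots,x_{p+q}).
\]
Each summand lies in $\lr{f,g}$, so $h\in\lr{f,g}$. Conversely, composing $h$ with $\id$ in the first $p$ inputs and with $u$ in the last $q$ inputs yields $f+\varepsilon(g)\,\mu_p=f$, so $f\in\lr{h}$; the symmetric specialization gives $g\in\lr{h}$. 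Hence $\lr{h}=\lr{f,g}$, and $\varepsilon(h)=\varepsilon(f)+\varepsilon(g)=0$ keeps the new generator inside the proper ideal $\I$. Iterating this pairwise fusion $k-1$ times reduces $f_1,\dots,f_k$ to a single generator, which proves~(2).

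The main obstacle is the augmentation term produced when the arity of $h$ is lowered by inserting units: a priori $f+\varepsilon(g)\mu_p$ need not equal $f$, so the individual generators cannot be recovered from $h$. What rescues the argument is the properness lemma above, which forces all these augmentations to vanish; I would also verify the routine compatibility that slot-wise insertion of $u$ in $\gamma$ matches the specialization $x_i=1$ of the associated multilinear polynomial, so that the operadic and PI-theoretic computations coincide.
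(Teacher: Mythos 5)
Your proof is correct, and while part (1) follows the paper's route exactly (transport the ascending chain condition from proper $T$-ideals to proper operadic ideals via the inclusion-preserving bijection of Theorem \ref{xxthm1.1}, with Kemer's theorem supplying ACC on $T$-ideals), part (2) genuinely diverges. The paper reduces (2) to finite generation (Theorem \ref{xxthm2.12}(2)) and then invokes Lemma \ref{xxlem2.13}, quoted as a black box from \cite[Lemma 2.15(2)]{BYZ}, to collapse finitely many generators to one. You instead prove that collapsing step from scratch inside $\As$: the observation that every element of a proper ideal has zero augmentation (else composing with the nullary unit $1_0$ in all slots puts $1_0$ into $\I(0)$ and forces $\I=\As$), followed by the pairwise fusion $h=f(x_1,\dots,x_p)x_{p+1}\cdots x_{p+q}+x_1\cdots x_p\,g(x_{p+1},\dots,x_{p+q})$, from which $f$ and $g$ are recovered by inserting units into the complementary blocks of slots, the error terms $\varepsilon(g)\mu_p$ and $\varepsilon(f)\mu_q$ vanishing by the augmentation lemma. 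This is exactly the classical unital-PI trick underlying Corollary \ref{xxcor2.14}, and it correctly isolates where unitality (i.e., the $2$-unitary structure, $\As(0)\neq 0$) enters --- which also explains why the statement fails for $\nias$ as noted in Remark \ref{xxrem2.15}. What your version buys is self-containedness and transparency about the role of the unit; what the paper's version buys is brevity and consistency with the general framework of \cite{BYZ}. One small point worth making explicit in a final write-up: the generators produced by noetherianity all have arity at least $2$ (an element of $\I(0)$ or $\I(1)$ already forces $\I=\As$), so the expressions $\mu_p$, $\mu_q$ and the block substitutions are always meaningful.
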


We are wondering if there is a version of Theorem \ref{xxthm1.2}
for other operads such as unital Poisson operad. Recall that 
$\nias$ is the symmetric operad encoding the associative algebras 
without unit. Note that Theorem \ref{xxthm1.2}(2) fails for $\nias$ 
though Theorem \ref{xxthm1.2}(1) holds [Remark \ref{xxrem2.15}]. An 
operadic ideal $\I$ of $\ias$ may not be generated by an element 
in $\I$ of the minimal degree. For example, the $k$-th truncation 
ideal $\up{k}$ of $\ias$ \cite[E0.0.2]{BYZ} is generated by an 
element in $\up{k}(m)$ for $m>k\ge 3$, rather than in $\up{k}(k)$, 
see \cite{BFXYZZ}. Therefore, it is reasonable to consider the 
single generator of an operadic ideal of $\I$ and the 
corresponding multilinear polynomial in PI-theory, which will 
be studied in \cite{BFXYZZ}.

\noindent\textbf{Acknowledgments}.
We would like to thank Xiao-Wu Chen, Ji-Wei He, and Zerui Zhang 
for many conversations and their useful suggestions. Y.-H. Bao 
was partially supported by the National Natural Science Foundation 
of China (Nos. 11871071 and 12371015) and the Science Fundation for Distinguished 
Young Scholars of Anhui Province (No. 2108085J01). Y. Ye was 
partially supported by the National Natural Science Foundation of 
China (Nos. 12131015, 11971449, 12161141001, and 12371042) and the Innovation Program for Quantum Science and Technology (No. 2021ZD0302902). J.J. Zhang was 
partially supported by the US National Science Foundation 
(Nos. DMS-2001015 and DMS-2302087).

\section{Proofs of statements}
\label{xxsec2}

Throughout $\k$ is a fixed field of characteristic
zero and all unadorned $\ot$ will be $\ot_\k$. First we recall 
some basics about the operad $\ias$. Generally we refer to 
\cite{LV,BYZ, QXZZ} for basic definitions and properties about 
operads. For convenience, we denote $[n]=\{1, 2, \cdots, n\}$.

Let $\S_n$ be the symmetric group of degree $n$. We follow
that convention in \cite{BYZ} and use the sequence
$(\sigma^{-1}(1), \sigma^{-1}(2), \cdots, \sigma^{-1}(n))$
to denote an element $\sigma\in \S_n$. Equivalently, each
$(i_1, i_2, \cdots, i_n)$ of $[n]$ corresponds to the
permutation $\sigma\in \S_n$ given by $\sigma(i_k)=k$
for all $1\le k\le n$. We also use $1_n$ to denote the
identity element in $\S_n$.

Recall that $\ias(n)= \Bbbk\S_n$ is the right regular
$\Bbbk\S_n$-module, and the composition map of $\ias$ is
linearly extended by the following maps: for $n>0$,
$k_1, k_2, \cdots, k_n\ge 0$,
\begin{align*}
\S_{n} \times \S_{k_1}\times \cdots \times \S_{k_n}
   &\to \S_{\sum_{i=1}^n k_i},\\
(\sigma, \sigma_1, \cdots, \sigma_n)
  &\mapsto (\tilde{B}_{\sigma^{-1}(1)}, \cdots,
	\tilde{B}_{\sigma^{-1}(n)})
\end{align*}
for all $\sigma\in \S_n$ and $\sigma_i\in \S_{k_i}$,
$1\le i\le n$, where
\[\tilde{B}_{i}=(\sum_{j=1}^{i-1}k_j+\sigma_i^{-1}(1),
\cdots, \sum_{j=1}^{i-1}k_j+\sigma_i^{-1}(k_i))\]
for all $i=1, \cdots, n$. The partial composition
\[\ias(m) \ucr{i} \ias(n)  \to \ias(m+n-1)\]
is given by
\[\mu\ucr{i}  \nu=\mu\circ (1_1, \cdots,
\underset{i}{\nu}, \cdots, 1_1)\]
for $\mu\in \ias(m), \nu\in \ias(n)$, $m\ge 1, n\ge 0$
and $1\le i\le m$.

The operad $\ias$ encodes unital associative algebras,
namely, a unital associative algebra is exactly a
$\ias$-algebra. Let $(A, \mu, u)$ be a unital associative
algebra. One can define an operad morphism
$\gamma=(\gamma_n)\colon \ias \to \mathcal{E}nd_A$
given by $\gamma_0(1_0)=u$ and $\gamma_2(1_2)=\mu$,
where $\mathcal{E}nd_A$ is the endomorphism operad
of the vector space $A$, see \cite[Section 5.2.11]{LV}.
Each $\theta=\sum_{\sigma\in \S_n} c_\sigma \sigma\in
\Bbbk\S_n$ gives an $n$-ary operation on $A$, 
\begin{align}
\notag%\label{E2.0.1}\tag{E2.0.1}
\gamma_n(\theta)\colon \quad
A^{\otimes n} \to A, \quad
\gamma_n(\theta)(a_1\otimes\cdots\otimes a_n)
=\sum_{\sigma\in \S_n} c_\sigma a_{\sigma^{-1}(1)}
\cdots a_{\sigma^{-1}(n)}, \quad {\text{for
$a_1,\cdots, a_n\in A$}}.
\end{align}

Next we work out some connections between T-ideals and 
operadic ideals of $\ias$.

Denote by $\Bbbk\lr{x_1, \cdots, x_n}$ the free associative
algebra in noncommutative indeterminants $\{x_1, \cdots, x_n\}$
over $\Bbbk$. Observe that a PI algebra $A$ always satisfies a 
multilinear polynomial identity of degree $\le d$ if $A$ satisfies 
an identity of degree $d$, see \cite[Proposition 13.1.9]{MR}
or \cite[Theorem 1.3.7]{GZ}. A \textit{multilinear polynomial}
of degree $n$ is a nonzero element $f(x_1, \cdots, x_n)\in
\Bbbk\langle x_1, \cdots, x_n \rangle$ of the form
\[f(x_1, \cdots, x_n)=\sum_{\sigma\in \S_n}
c_\sigma x_{\sigma^{-1}(1)} \cdots x_{\sigma^{-1}(n)}\]
for some $c_{\sigma}\in \Bbbk$. The method of
multilinearization actually plays a very important role in
the study of the identities of a PI-algebra.

Based on the following observation, one can study
PI-algebras in the language of operads. The following
lemma is a folklore.

\begin{lem}
\label{xxlem2.1}
Let $A$ be an associative algebra. Then $A$ is a PI
algebra if and only if $A$ is a \; $\ias/\I$-algebra
for some nonzero operadic ideal $\I$ of $\ias$.
\end{lem}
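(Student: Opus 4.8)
The plan is to establish both directions of the equivalence by translating between the two languages: multilinear polynomial identities on one side, and operadic ideals on the other. The key bridge is the operad morphism $\gamma=(\gamma_n)\colon \ias\to\mathcal{E}nd_A$ recalled above, together with the identification $\ias(n)=\Bbbk\S_n$ and the formula $\gamma_n(\theta)(a_1\ot\cdots\ot a_n)=\sum_\sigma c_\sigma\, a_{\sigma^{-1}(1)}\cdots a_{\sigma^{-1}(n)}$, which says precisely that an element $\theta=\sum_\sigma c_\sigma\sigma\in\ias(n)$ corresponds to the multilinear polynomial $f_\theta(x_1,\dots,x_n)=\sum_\sigma c_\sigma\, x_{\sigma^{-1}(1)}\cdots x_{\sigma^{-1}(n)}$, and that $\theta$ lies in $\Ker\gamma_n$ exactly when $f_\theta$ is a multilinear identity of $A$.

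\textbf{($\Leftarrow$)} Suppose $A$ is a $\ias/\I$-algebra for some nonzero operadic ideal $\I$. Then the structure morphism $\ias\to\mathcal{E}nd_A$ factors through $\ias/\I$, so $\I\subseteq\Ker\gamma$; in particular $\I(n)\subseteq\Ker\gamma_n$ for every $n$. Since $\I\neq 0$, pick $n$ with $\I(n)\neq 0$ and a nonzero $\theta\in\I(n)$. By the correspondence above, the associated $f_\theta$ is a nonzero element of $\Bbbk\lr{x_1,\dots,x_n}$ that vanishes identically on $A$, hence a nontrivial polynomial identity, so $A$ is a PI-algebra. (Strictly, one should note $f_\theta$ is genuinely nonzero: the monomials $x_{\sigma^{-1}(1)}\cdots x_{\sigma^{-1}(n)}$ for distinct $\sigma$ are distinct basis elements of the free algebra, so $f_\theta=0$ forces $\theta=0$.)

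\textbf{($\Rightarrow$)} Suppose $A$ is a PI-algebra. By the cited multilinearization result \cite[Proposition 13.1.9]{MR}, $A$ satisfies a \emph{nontrivial multilinear} identity, say of degree $n$; let $\theta\in\ias(n)=\Bbbk\S_n$ be the corresponding nonzero element, so that $\theta\in\Ker\gamma_n$. Now set $\I:=\Ker\gamma$, the kernel of the structure morphism $\gamma\colon\ias\to\mathcal{E}nd_A$. Then $\I$ is an operadic ideal of $\ias$ (kernels of operad morphisms are operadic ideals), it is nonzero because $\theta\in\I(n)$, and $\ias/\I$ embeds into $\mathcal{E}nd_A$ so that $A$ is naturally a $\ias/\I$-algebra. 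This produces the required nonzero ideal.

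\textbf{The main obstacle} is not conceptual but bookkeeping: one must verify carefully that $\Ker\gamma$ is closed under the partial compositions $\ucr{i}$ from \emph{both} sides and under the $\S_n$-action, i.e.\ that it is a genuine \emph{operadic} ideal rather than merely a collection of subspaces. This follows formally because $\gamma$ is an operad morphism, so $\gamma$ commutes with the symmetric-group actions and with $\ucr{i}$; hence if either input of a partial composition lies in $\Ker\gamma$, the output does too. The only other point requiring a line of care is the equivalence ``$f_\theta$ is a multilinear identity of $A$'' $\Longleftrightarrow$ ``$\theta\in\Ker\gamma_n$'', which is immediate from the displayed formula for $\gamma_n$ but is exactly where the characteristic-zero multilinearization hypothesis is used to guarantee that the PI condition can be detected at the multilinear level.
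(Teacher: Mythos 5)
Your proposal is correct and follows essentially the same route as the paper: both directions hinge on the dictionary $\theta\leftrightarrow f_\theta$ between $\Bbbk\S_n=\ias(n)$ and multilinear polynomials, with the forward direction using multilinearization to produce a multilinear identity and then taking $\I=\Ker\gamma$, and the converse reading off an identity from any nonzero $\theta\in\I(n)$. Your added checks (that $f_\theta\neq 0$ when $\theta\neq 0$, and that $\Ker\gamma$ is genuinely an operadic ideal) are details the paper leaves implicit but are consistent with its argument.
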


\begin{proof}
Let $\I\neq 0$ be an ideal of\; $\ias$ and $A$ a\; 
$\ias/\I$-algebra with the operadic morphism
$\bar \gamma\colon \ias/\I\to \mathcal{E}nd_A$.
Clearly, $A$ is an associative algebra. Then for each
nonzero element
$\theta=\sum_{\sigma\in \S_n}c_\sigma \sigma\in \I(n)$,
the algebra $A$ satisfies the following multilinear polynomial
\[f_\theta(x_1, \cdots, x_n)=\sum_{\sigma\in \S_n}
c_\sigma x_{\sigma^{-1}(1)}\cdots x_{\sigma^{-1}(n)},\]
since
\begin{align*}
f_\theta(a_1, \cdots, a_n)
=&\bar \gamma(\theta+\I)(a_1, \cdots, a_n)=0
\end{align*}
for any $a_1, \cdots, a_n\in A$. Conversely, let $A$ be a
PI-algebra satisfying a multilinear polynomial of the form
$$f(x_1, \cdots, x_n)=\sum_{\sigma\in\S_n}
c_\sigma x_{\sigma^{-1}(1)}\cdots x_{\sigma^{-1}(n)},$$
where $c_{\sigma}\in \Bbbk$. Clearly, $A$ is a $\ias$-algebra.
Suppose that $\gamma\colon \ias\to \mathcal{E}nd_A$ is the
corresponding operadic morphism. Then 
$\theta_f=\sum_{\sigma\in \S_n} c_\sigma \sigma\in \Ker \gamma$ 
and therefore $\Ker\gamma$ is a nonzero operadic ideal of $\ias$. 
It follows that $A$ is an algebra over the quotient operad 
$\ias/\Ker\gamma$.
\end{proof}

Denote $\I_A:=\Ker\gamma$ as in the proof of Lemma \ref{xxlem2.1}.
In this case, we say that $A$ is a PI-algebra associated to the
operadic ideal $\I_A$. Clearly, the operadic ideal $\I_A$ is 
the maximal operadic ideal $\I$ of $\ias$ such that $A$ is an 
algebra over $\ias/\I$.

Let $\I$ be a nonzero operadic ideal of $\ias$ and
$\mathcal{A}_\I=\ias/\I$. Suppose that $V$ is a vector space 
over $\Bbbk$. Recall the free $\mathcal{A}_\I$-algebra 
$\mathcal{A}_\I(V)$ with
\[\mathcal{A}_\I(V)= \bigoplus_{k\ge0}\mathcal{A}_\I(V)_k,
\quad {\rm with} \quad \mathcal{A}_\I(V)_k
= \mathcal{A}_\I(k)\ot_{\k\S_k} V^{\ot k},\]
where the left action of $\S_k$ on $V^{\otimes k}$ is given by
\[\sigma\cdot (v_1\otimes \cdots \otimes v_n)\colon
=v_{\sigma^{-1}(1)} \otimes \cdots \otimes v_{\sigma^{-1}(n)},\]
and the multiplication
\[\mathcal{A}_\I(V)_m \otimes \mathcal{A}_\I(V)_n
\to \mathcal{A}_\I(V)_{m+n}\]
given by
\[[\bar \mu, u_1, \cdots, u_m]\cdot [\bar\nu, v_1, \cdots, v_n]
\colon=[\overline{1_2\circ (\mu, \nu)},
u_1, \cdots, u_m, v_1, \cdots, v_n].\]

Clearly, each nonzero element in $\I(n)$ gives an identity
satisfied by $\mathcal{A}_\I(V)$. Moreover, the free
$\mathcal{A}_\I$-algebra $\mathcal{A}_\I(V)$ is a PI-algebra
associated to $\I$. Take different  vector space $V$, one
can obtain different PI-algebra $\mathcal{A}_\I(V)$ associated
to $\I$.

Let $\Bbbk\langle X \rangle$ be the free algebra generated
by the set $X=\{x_i\}_{i\in \N_+}$. Recall that an ideal $H$
of $\Bbbk\langle X \rangle$ is called a \textit{T-ideal} if
$\varphi(H)\subset H$ for every endomorphism $\varphi$ of
$\Bbbk\langle X \rangle$. Let $A$ be a PI algebra. The set
$\Id(A)$ of all polynomial identities of $A$ in
$\Bbbk\langle X \rangle$ is a T-ideal.
Conversely, if $H$ is a T-ideal of $\Bbbk\langle X \rangle$,
then $\Bbbk\langle X \rangle/H$ is a ``free'' or ``universal''
PI algebra in some sense. To be precise, if $A$ is a PI
algebra such that $\Id(A)\supseteq H$, then for any set
mapping $\phi\colon X \to A$, there exists a unique algebra
homomorphism $\bar \phi\colon \Bbbk\lr{X}/H \to A$ such that
the following diagram
\begin{align*}
\xymatrix{X \ar[r]^-{\phi}\ar[dr] &  A\\
	& \Bbbk\langle X \rangle/H\ar@{-->}[u]_{\bar\phi} } \tag{E2.1.1}
\end{align*}
commutes, see \cite[Theorem 2.2.17]{AGPR}. It is easily seen that 
the ``free'' algebra $\Bbbk\lr{X}/H$ is just the free 
$\ias/\I$-algebra $(\ias/\I)(V)$, where $V$ is the vector space 
spanned by $X=\{x_i\}_{i\in \N_+}$, and $\I$ is the kernel
of the structure morphism $\gamma\colon \ias \to 
\mathcal{E}nd_{\Bbbk\lr{X}/H}$ of $\Bbbk\lr{X}/H$ as a 
$\ias$-algebra, since the free algebra $\ias/\I(V)$ also 
guarantees the existence of the above commutative diagram.
Therefore, there is a correspondence between the classes of
PI algebras and the T-ideals of $\Bbbk\langle X\rangle$.

A basic question about finite generation of T-ideals was posed 
by Specht \cite{Sp} in 1950. In order to avoid confusion with 
finitely generated as an ideal, a finitely generated T-ideal in 
the class of T-ideals is usually called finitely based. In 1987 
Kemer gave an affirmative answer \cite{Ke1, Ke2}. Further
discussions can be found in \cite{AGPR, AKK, KR, Pr2}.

\begin{thm}\cite[Theorem 2.4]{Ke2}
\label{xxthm2.2}
Every associative algebra {\rm{(}}with or without unit{\rm{)}}
has a finite basis of identities.
\end{thm}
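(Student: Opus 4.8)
This is Kemer's positive solution to the Specht problem, so the plan is to prove the equivalent statement that the set of $T$-ideals of $\Bbbk\lr{X}$ satisfies the ascending chain condition, i.e.\ that every $T$-ideal is finitely based. Since $\operatorname{char}\Bbbk=0$, the first step is to pass to multilinear identities: by (multi)linearization every $T$-ideal $\Gamma$ is generated, as a $T$-ideal, by its multilinear components $\Gamma\cap V_n$, each of which is an $\S_n$-submodule of the regular module $V_n$. This reduces the problem to controlling the sequence of $\S_n$-modules $V_n/(\Gamma\cap V_n)$ (the cocharacters) and lets one bring in the representation theory of the symmetric groups together with the codimension growth of $\Gamma$.

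The heart of the argument is a finite-dimensional reduction via superalgebras and the Grassmann envelope. The plan is to enlarge the category to $\Z/2$-graded algebras $B=B_0\oplus B_1$ and their graded identities, and to form the Grassmann envelope $G(B)=(B_0\ot E_0)\oplus(B_1\ot E_1)$, where $E=E_0\oplus E_1$ is the infinite-dimensional Grassmann algebra with its natural grading. Kemer's first structural theorem is that for every associative PI-algebra $A$ over $\Bbbk$ there is a \emph{finite-dimensional} superalgebra $B$ with $\Id(A)=\Id(G(B))$. This transfers the finiteness question for an arbitrary $T$-ideal to the $T$-ideal of the Grassmann envelope of a finite-dimensional object, where the Wedderburn--Malcev decomposition into a semisimple part plus a nilpotent radical becomes available.

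With this in hand, I would attach to each $T$-ideal $\Gamma$ its \emph{Kemer index} $(d,s)$, which records the maximal size $d$ and the maximal number $s$ of pairwise disjoint sets of variables in which some multilinear polynomial outside $\Gamma$ can be made alternating; these two numbers are governed, respectively, by the dimension of the semisimple part and by the nilpotency index of the radical of the associated finite-dimensional superalgebra. One then isolates the \emph{Kemer polynomials} --- the multilinear polynomials alternating in the maximal number $s$ of sets of the maximal size $d$ --- and proves that, modulo finitely many identities of strictly smaller index, they already force membership in $\Gamma$. Since the set of possible indices is partially ordered and well-founded, a Noetherian induction on $(d,s)$ produces a finite basis: an ascending chain of $T$-ideals has non-increasing Kemer index, which must eventually stabilize, after which a finiteness argument inside a fixed index closes the chain. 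The non-unital case then follows from the unital one, since adjoining a unit changes neither the relevant multilinear identities nor the finite-basis property.

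The main obstacle is precisely the two deepest inputs above: the Grassmann-envelope reduction to a finite-dimensional superalgebra, and the fine control of the Kemer index that makes the Kemer polynomials behave like a finite generating set. Both rest on substantial structure theory --- the representation theory of $\S_n$, trace identities and the Razmyslov--Kemer--Procesi circle of ideas for finite-dimensional algebras, and a delicate analysis of how alternating sets interact with the radical. Everything else (the multilinearization reduction and the equivalence between the ascending chain condition and the finite-basis property) is routine by comparison.
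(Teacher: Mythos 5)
The paper does not prove this statement at all: it is quoted verbatim from Kemer \cite[Theorem 2.4]{Ke2} and used as a black box, with only the one-sentence remark that ``Kemer's proof is based on some structure theory of superidentities of superalgebras and certain graded tensor products with the Grassmann algebra'' (and, later, the explicit disclaimer that the authors do not provide a new proof). Measured against that, your outline is a faithful and accurate map of the actual proof in the literature --- multilinearization in characteristic zero, the representability theorem identifying $\Id(A)$ with $\Id(G(B))$ for a finite-dimensional superalgebra $B$, the Kemer index read off from the Wedderburn--Malcev decomposition, Kemer polynomials, and well-founded induction on the index. But it is a roadmap, not a proof: the two results you defer (representability via the Grassmann envelope, and the fact that Kemer polynomials together with finitely many identities of smaller index generate the $T$-ideal) \emph{are} the theorem in all but name, and each occupies many pages in \cite{Ke2} or \cite{AGPR}. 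As submitted, the proposal establishes nothing that is not already assumed.

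One concrete error worth fixing even at the level of a sketch: your final reduction of the non-unital case to the unital one does not work as stated. Adjoining a unit \emph{does} change the multilinear identities in general --- for the unitalization $A^{1}$ one only has $\Id(A^{1})\subseteq\Id(A)$ (e.g.\ $x_{1}x_{2}$ is an identity of a zero-multiplication algebra but not of its unitalization), so a finite basis for $\Id(A^{1})$ says nothing directly about $\Id(A)$. The standard treatments prove the theorem for not-necessarily-unital algebras, and the unital case is the one with extra structure; indeed the whole point of Corollary \ref{xxcor2.14} in this paper is that the unital case admits a \emph{stronger} conclusion (a single generating polynomial), which genuinely fails without a unit.
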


Kemer's proof is based on some structure theory of superidentities
of superalgebras and certain graded tensor products with the
Grassmann algebra.

For each $n\in \N$, we denote $V_n$ the subspace of
$\Bbbk\lr{x_1, \cdots, x_n}$ spanned by
$x_{\sigma^{-1}(1)}\cdots x_{\sigma^{-1}(n)}, \sigma\in \S_n$,
which consists of all multilinear polynomials of degree $n$ in
the indeterminants $x_1, \cdots, x_n \in X$. Observe that
$V_n$ admits the right $\Bbbk\S_n$-action given by
\[(x_{i_1}\cdots x_{i_n})\ast \tau\colon=x_{\tau^{-1}(i_1)}
\cdots x_{\tau^{-1}(i_n)}\]
for $\tau\in \S_n$, and
\[\Phi_n\colon \As(n) \to  V_n, \quad \quad \sigma  \mapsto
 x_{\sigma^{-1}(1)}\cdots x_{\sigma^{-1}(n)} \]
is an isomorphism of right $\Bbbk\S_n$-modules.

\begin{lem}
\label{xxlem2.3}
Let $A$ be a PI algebra and $V_n(A)$ be the subspace of $V_n$
consisting of all multilinear identities of $A$ in
$\Bbbk\lr{x_1, \cdots, x_n}$. Denote $\I(n)=\Phi_n^{-1}(V_n(A))$
for each $n\in \N_+$. Then $\I=(\I(n))_{n\in \N_+}$ is an
operadic ideal of $\ias$.
\end{lem}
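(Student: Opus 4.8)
The plan is to verify directly the two defining properties of an operadic ideal: that each $\I(n)$ is a right $\k\S_n$-submodule of $\As(n)$, and that the collection $\I$ is closed under the operadic partial compositions $\ucr{i}$. A single reformulation underlies both checks. Writing $\gamma\colon \ias \to \mathcal{E}nd_A$ for the structure morphism of $A$ as a $\ias$-algebra, the formula for $\gamma_n$ recalled above shows that for $\theta=\sum_{\sigma}c_\sigma\sigma\in\As(n)$ one has $\Phi_n(\theta)=f_\theta$ and $f_\theta(a_1,\dots,a_n)=\gamma_n(\theta)(a_1\otimes\cdots\otimes a_n)$ for all $a_i\in A$. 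Hence $\theta\in\I(n)$ precisely when $\gamma_n(\theta)=0$, so that $\I(n)=\Ker\gamma_n$ and $\I=\Ker\gamma$. Once this identification is in place the statement is immediate, since the kernel of a morphism of operads is always an operadic ideal; nevertheless I would also record the hands-on verification, which is where the real content lies.

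For the submodule property, I would show that $V_n(A)$ is stable under the right $\S_n$-action. If $f$ is a multilinear identity of $A$, then $f\ast\tau$ is obtained from $f$ by relabelling its indeterminants, and one checks that $(f\ast\tau)(a_1,\dots,a_n)=f(a_{\tau^{-1}(1)},\dots,a_{\tau^{-1}(n)})$; since $f$ vanishes on every tuple of $A^{\times n}$, so does $f\ast\tau$. Thus $V_n(A)$ is a right $\k\S_n$-submodule of $V_n$, and because $\Phi_n$ is an isomorphism of right $\k\S_n$-modules, $\I(n)=\Phi_n^{-1}(V_n(A))$ is a submodule of $\As(n)$. Equivalently, $\Ker\gamma_n$ is a submodule because $\gamma_n$ is $\S_n$-equivariant.

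For closure under composition, the decisive step is to match the operadic partial composition with substitution of multilinear polynomials: $\Phi_{m+n-1}(\mu\ucr{i}\nu)$ is obtained from $f_\mu=\Phi_m(\mu)$ by substituting $f_\nu=\Phi_n(\nu)$, written in fresh indeterminants, for the $i$-th argument of $f_\mu$. Granting this dictionary, I would treat two cases. If $\mu\in\I(m)$, then $f_\mu$ vanishes on all $A$-tuples; since any evaluation of $f_\nu$ lies in $A$, the substituted polynomial vanishes identically on $A$, giving $\mu\ucr{i}\nu\in\I(m+n-1)$. If instead $\nu\in\I(n)$, then $f_\nu$ evaluates to $0$ on every $A$-tuple, and because $f_\mu$ is multilinear, replacing its $i$-th argument by the identically-zero expression yields $0$; hence again $\mu\ucr{i}\nu\in\I$. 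Both conclusions can alternatively be read off at once from $\I=\Ker\gamma$ together with the multiplicativity of $\gamma$.

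The main obstacle is not the logic of these two cases, which is brief, but pinning down the composition--substitution correspondence precisely, that is, tracking how the block-permutation formula defining the composition of $\ias$ corresponds, under the isomorphisms $\Phi_\bullet$, to inserting one multilinear monomial into a variable slot of another and relabelling. Once that bookkeeping is settled, the two features that close the argument are exactly the vanishing of the outer identity on all $A$-inputs and the multilinearity of the outer polynomial that forces a zero inner argument to annihilate the whole expression; both are precisely what multilinearity provides.
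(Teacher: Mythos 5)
Your proof is correct and follows essentially the same route as the paper's: check that $V_n(A)$ is stable under the right $\S_n$-action, and then verify closure under partial composition via the substitution identity $\Phi_{m+n-1}(\mu\ucr{i}\nu)(a_1,\dots,a_{m+n-1})=\Phi_m(\mu)(a_1,\dots,a_{i-1},\Phi_n(\nu)(a_i,\dots,a_{i+n-1}),a_{i+n},\dots,a_{m+n-1})$, which the paper likewise asserts without further bookkeeping. Your additional observation that $\I=\Ker\gamma$ (so the result also follows because kernels of operad morphisms are operadic ideals) is a valid shortcut already implicit in the paper's Lemma \ref{xxlem2.1}, where $\I_A=\Ker\gamma$ is introduced.
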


\begin{proof}
It is easily seen that $V_n(A)$ is invariant under the right
$\S_n$-action, and so is $\I(n)$. Let $\mu\in \ias(m), \nu\in
\ias(n)$, $1\le i\le m$. If $\mu\in \I(m)$ or $\nu\in \I(n)$,
then for $a_1, \cdots, a_{m+n-1}\in A$, we have
%\begin{align*}
%(\Phi_{m+n-1}(\mu\ucr{i} \nu))&(r_1, \cdots, r_{m+n-1})\\
%=&\Phi_m(\mu)(r_1, \cdots, r_{i-1}, \Phi_n(\nu)
% (r_i, \cdots, r_{i+n-1}), r_{i+n}, \cdots, r_{m+n-1})\\
%=& 0.
%\end{align*}
$$
(\Phi_{m+n-1}(\mu\ucr{i} \nu))(a_1, \cdots, a_{m+n-1})
=\Phi_m(\mu)(a_1, \cdots, a_{i-1}, \Phi_n(\nu)
 (a_i, \cdots, a_{i+n-1}), a_{i+n}, \cdots, a_{m+n-1})
=0.
$$
Therefore, $\Phi_{m+n-1}(\mu\ucr{i} \nu)\in V_{m+n-1}(A)$ and
$\mu\ucr{i}\nu\in \I(m+n-1)$. It follows that $\I$ is an
operadic ideal of $\ias$.
\end{proof}

\begin{lem}
\label{xxlem2.4}
Let $\I=(\I(n))_{n\in \N}$ be an operadic ideal of $\As$
and $\Bbbk\langle X\rangle$ the free algebra generated by
$X=\{x_n\}_{n\in \N^+}$. Put
\begin{equation}
\label{E2.4.1}\tag{E2.4.1}
J_n\colon=\{\Phi_n(\theta)(f_1, \cdots, f_n)\mid \theta
\in \I(n), f_1, \cdots, f_n\in \Bbbk\langle X \rangle\}
\end{equation}
for each $n\in \N$. Suppose that $J$ is the ideal of
$\Bbbk\langle X \rangle$ generated by $\cup_{n\in \N}J_n$.
Then $J$ is a T-ideal of $\Bbbk\langle X\rangle$.
\end{lem}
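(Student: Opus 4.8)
The plan is to show that $J$ is a two-sided ideal closed under all endomorphisms of $\Bbbk\langle X\rangle$. Since $J$ is defined as the ideal generated by $\bigcup_n J_n$, it is automatically a two-sided ideal, so the real content is T-invariance: for any endomorphism $\varphi$ of $\Bbbk\langle X\rangle$, we need $\varphi(J)\subseteq J$. Because $\varphi$ is an algebra map and $J$ is generated by the $J_n$ as an ideal, a typical element of $J$ is a finite sum of terms $g\cdot w\cdot h$ with $g,h\in\Bbbk\langle X\rangle$ and $w\in J_n$ for various $n$. Applying $\varphi$ gives $\varphi(g)\,\varphi(w)\,\varphi(h)$, so it suffices to prove that $\varphi(w)\in J$ for every generator $w\in J_n$, and then use that $J$ is already an ideal to absorb the outer factors $\varphi(g),\varphi(h)$.

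First I would reduce to the generators. A generator of $J_n$ has the form $\Phi_n(\theta)(f_1,\dots,f_n)$ for some $\theta\in\I(n)$ and $f_1,\dots,f_n\in\Bbbk\langle X\rangle$. Writing $\theta=\sum_{\sigma}c_\sigma\sigma$, by the definition of $\Phi_n$ this is the multilinear substitution $\sum_\sigma c_\sigma\, f_{\sigma^{-1}(1)}\cdots f_{\sigma^{-1}(n)}$, i.e.\ the value of the multilinear polynomial $f_\theta(x_1,\dots,x_n)=\sum_\sigma c_\sigma x_{\sigma^{-1}(1)}\cdots x_{\sigma^{-1}(n)}$ at the arguments $f_1,\dots,f_n$. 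The key observation is that an endomorphism $\varphi$ commutes with such substitutions: applying $\varphi$ to a word in the $f_i$ just replaces each $f_i$ by $\varphi(f_i)$, because $\varphi$ is an algebra homomorphism. Hence
\begin{align*}
\varphi\bigl(\Phi_n(\theta)(f_1,\dots,f_n)\bigr)
&=\sum_\sigma c_\sigma\,\varphi(f_{\sigma^{-1}(1)})\cdots\varphi(f_{\sigma^{-1}(n)})\\
&=\Phi_n(\theta)(\varphi(f_1),\dots,\varphi(f_n)),
\end{align*}
which is again an element of $J_n$ since the arguments $\varphi(f_1),\dots,\varphi(f_n)$ are themselves elements of $\Bbbk\langle X\rangle$. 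Thus each generator is sent back into $J_n\subseteq J$.

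Assembling these pieces, for an arbitrary element $\sum_j g_j\, w_j\, h_j\in J$ with $w_j\in\bigcup_n J_n$ we get $\varphi\bigl(\sum_j g_j w_j h_j\bigr)=\sum_j\varphi(g_j)\,\varphi(w_j)\,\varphi(h_j)$, and since each $\varphi(w_j)\in J$ and $J$ is an ideal, the whole sum lies in $J$. This proves $\varphi(J)\subseteq J$ for every endomorphism $\varphi$, so $J$ is a T-ideal. I do not expect any serious obstacle here; the only point requiring mild care is bookkeeping the two layers of the definition of $J$ (the sets $J_n$ of substitution values versus the ideal they generate) and confirming that the crucial interchange of $\varphi$ with multilinear substitution is exactly the statement that $\varphi$ respects multiplication, together with the fact that the coefficients $c_\sigma$ are fixed scalars untouched by $\varphi$.
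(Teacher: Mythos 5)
Your proof is correct and follows essentially the same route as the paper: the key step in both is the identity $\varphi(\Phi_n(\theta)(f_1,\dots,f_n))=\Phi_n(\theta)(\varphi(f_1),\dots,\varphi(f_n))\in J_n$, so that $\varphi$ maps generators to generators and the ideal structure absorbs the rest. Your version just spells out the bookkeeping that the paper leaves implicit.
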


\begin{proof}
Let $\varphi$ be an endomorphism of the free algebra
$\Bbbk\langle X \rangle$. For any $\Phi_n(\theta)(f_1,
\cdots, f_n)\in J_n$, we have
\[\varphi(\Phi_n(\theta)(f_1, \cdots, f_n))=\Phi_n(\theta)
(\varphi(f_1), \cdots, \varphi(f_n))\in J_n.\]
Therefore, $\varphi$ sends a generator onto a generator of
$J$, and $J$ is a $T$-ideal of $\Bbbk\langle X\rangle$.
\end{proof}

We fix the following notations.

\begin{notation}
\label{xxnot2.5}
Let $A$ be a PI algebra and $\I$ be a proper operadic ideal
of $\As$.
\begin{enumerate}
\item[(1)]
Let $\Psi(A)$ denote the operadic ideal of $\As$ constructed in
Lemma \ref{xxlem2.3}.
\item[(2)]
Let $J$ be a T-ideal of $\Bbbk\lr{X}$. By abuse of notation,
$\Psi(\Bbbk\lr{X}/J)$ is also denoted by $\Psi(J)$. 
Given a T-idea $J$, the operadic ideal $\Psi(J)$ of $\ias$ is 
defined by $\Psi(J)(n)=\Phi^{-1}_{n}(V_n(\Bbbk\lr{X}/J))$ for 
all $n$.  
\item[(3)]
Let $\Omega(\I)$ denote the T-ideal of $\Bbbk\lr{X}$ constructed
in Lemma \ref{xxlem2.4}. Given an operadic ideal $\I$ of 
$\ias$, then $\Omega(\I)$ is generated by $\cup_n \Phi_n(\I(n))$ 
as a T-ideal. 
\end{enumerate}
\end{notation}

Two sets of polynomials are said to be {\it equivalent}
if they generate the same T-ideal. As usual we assume that
$\Bbbk$ is a base field of characteristic zero.

\begin{lem}
\cite[Theorems 1.3.7 and 1.3.8]{GZ}
\label{xxlem2.6}
Every nonzero polynomial
$f\in \Bbbk\lr{X}$ is equivalent to a finite set
$\{f_1,\cdots, f_w\}$ of multilinear polynomials with
$\deg f_i\leq \deg f$.
\end{lem}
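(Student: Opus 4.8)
The plan is to reduce $f$ to multilinear polynomials in two reversible steps, multihomogenization followed by multilinearization, both of which exploit that $\Bbbk$ is an infinite field of characteristic zero. Throughout I write $\lr{f}_T$ for the T-ideal of $\Bbbk\lr{X}$ generated by a polynomial $f$, and I note that $f$ involves only finitely many variables, say $x_1,\dots,x_n$, so that all constructions below stay within a finite set of indeterminates.

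First I would decompose $f$ into its multihomogeneous components. Write $f=\sum_{d} f_d$, where $d=(d_1,\dots,d_n)$ ranges over the multidegrees occurring in $f$ and $f_d$ is the part of $f$ homogeneous of degree $d_i$ in each $x_i$. To see that every $f_d$ already lies in $\lr{f}_T$, fix a variable $x_i$ and substitute $x_i\mapsto \lambda_k x_i$ for distinct scalars $\lambda_0,\dots,\lambda_m\in\Bbbk$, where $m=\deg_{x_i}f$. Since $x_i\mapsto\lambda_k x_i$ (fixing the other variables) is an endomorphism of $\Bbbk\lr{X}$, each $f(\dots,\lambda_k x_i,\dots)$ lies in $\lr{f}_T$, and $f(\dots,\lambda_k x_i,\dots)=\sum_{j=0}^{m}\lambda_k^{\,j}f^{(j)}$, where $f^{(j)}$ is the degree-$j$-in-$x_i$ part. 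Because $\Bbbk$ is infinite, the Vandermonde matrix $(\lambda_k^{\,j})_{0\le k,j\le m}$ is invertible, so each $f^{(j)}$ is a $\Bbbk$-linear combination of the $f(\dots,\lambda_k x_i,\dots)$ and hence $f^{(j)}\in\lr{f}_T$. Iterating over $x_1,\dots,x_n$ shows $f_d\in\lr{f}_T$ for every $d$. Conversely $f=\sum_d f_d$, so $f$ is equivalent to the finite set of its nonzero multihomogeneous components, each of degree $\le\deg f$.

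Next I would multilinearize each multihomogeneous component $g$. If $g$ has degree $d_i>1$ in some $x_i$, I perform a partial linearization: introduce a fresh variable $x_{n+1}$, substitute $x_i\mapsto x_i+x_{n+1}$, and extract the component $g'$ that is homogeneous of degree $1$ in $x_{n+1}$ (and degree $d_i-1$ in $x_i$). The same Vandermonde argument, applied to $x_{n+1}\mapsto\lambda x_{n+1}$, shows $g'\in\lr{g}_T$; moreover $g'$ has total degree $\deg g$ but strictly smaller degree in $x_i$. This step is reversible in characteristic zero: applying the endomorphism $x_{n+1}\mapsto x_i$ to $g'$ recovers $d_i\,g$, so $d_i\,g\in\lr{g'}_T$ and hence $g\in\lr{g'}_T$, making $g$ and $g'$ equivalent. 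Repeating until every variable occurs to degree $\le1$ terminates after finitely many steps and produces a finite set of multilinear polynomials equivalent to $g$, each of degree $\le\deg g$.

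Combining the two steps and collecting all multilinear polynomials so obtained yields a finite set $\{f_1,\dots,f_w\}$ equivalent to $f$ with $\deg f_i\le\deg f$. The crux of the argument, and the only place where characteristic zero is genuinely used, is the reversibility of both reductions: one must check not merely that each component or partial linearization lies in $\lr{f}_T$, but that $f$ itself is recovered from them. This rests on the invertibility of the Vandermonde matrices and on being able to divide by the integers $d_i$, which is exactly what an infinite field of characteristic zero supplies.
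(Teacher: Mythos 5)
Your proof is correct. The paper offers no proof of this lemma---it is quoted directly from \cite[Theorems 1.3.7 and 1.3.8]{GZ}---and your two-step reduction (Vandermonde multihomogenization followed by partial linearization, with reversibility coming from invertibility of the Vandermonde matrix and of the integers $d_i$ in characteristic zero) is exactly the standard argument given in that reference.
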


By Theorem \ref{xxthm2.2} and Lemma \ref{xxlem2.6}, the
following result is obvious.

\begin{cor}
\label{xxcor2.7}
Every proper T-ideal is generated by finitely many
multilinear polynomials as a T-ideal.
\end{cor}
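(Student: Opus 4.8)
The plan is to combine the two deep inputs already in hand---Kemer's finite basis theorem (Theorem \ref{xxthm2.2}) and the multilinearization statement of Lemma \ref{xxlem2.6}---with the standard identification of a T-ideal as the full set of identities of its quotient algebra. Given a proper T-ideal $J$, I would first set $A:=\k\lr{X}/J$ and observe that $\Id(A)=J$. Indeed, a polynomial $f$ vanishes identically on $A$ precisely when $f(g_1,\cdots,g_n)\in J$ for all $g_1,\cdots,g_n\in\k\lr{X}$; since $J$ is a T-ideal it is closed under such substitutions, so this holds iff $f\in J$ (taking $g_i=x_i$ for the forward direction). This is exactly the universal property recorded around diagram (E2.1.1).

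Next I would apply Kemer's theorem to $A$: it provides a finite basis of identities, that is, finitely many polynomials $g_1,\cdots,g_s$ whose generated T-ideal equals $\Id(A)=J$. After discarding any zero polynomials, each $g_i$ is nonzero, so Lemma \ref{xxlem2.6} applies and yields a finite set $\{f_{i,1},\cdots,f_{i,w_i}\}$ of multilinear polynomials equivalent to $g_i$, i.e.\ generating the same T-ideal as $g_i$ (with $\deg f_{i,j}\le \deg g_i$). Assembling these, the finite collection $\bigcup_{i=1}^{s}\{f_{i,1},\cdots,f_{i,w_i}\}$ of multilinear polynomials should generate $J$ as a T-ideal. (Note that Kemer's theorem applies to $A$ as an arbitrary associative algebra, so nothing extra is needed to handle the degenerate case $J=0$, where the empty set of multilinear polynomials suffices.)

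The argument is short because its depth is entirely outsourced to Theorem \ref{xxthm2.2} and Lemma \ref{xxlem2.6}. The only point needing genuine care---and the main potential obstacle---is checking that equivalence is compatible with forming finite unions of generating sets: one must verify that replacing each single generator $g_i$ by its equivalent multilinear set leaves the T-ideal generated by the whole family unchanged. This reduces to the additivity of T-ideal generation over unions, namely that the T-ideal generated by a union is the sum of the T-ideals generated by the pieces, which lets the pairwise equivalences $\langle g_i\rangle=\langle f_{i,1},\cdots,f_{i,w_i}\rangle$ be combined into a single equivalence for the whole set.
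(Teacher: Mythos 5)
Your argument is correct and is exactly the one the paper intends: the paper states the corollary as an immediate consequence of Kemer's theorem (Theorem \ref{xxthm2.2}) and the multilinearization lemma (Lemma \ref{xxlem2.6}), which is precisely your combination of a finite basis of identities with equivalent finite sets of multilinear polynomials. The extra details you supply (that $\Id(\k\lr{X}/J)=J$ and that equivalence is compatible with unions of generating sets) are the routine verifications the paper leaves implicit in the word ``obvious.''
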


Let $f$ be a multilinear polynomial of degree $n$ in
$\Bbbk\lr{x_1, \cdots, x_n}$.
We use $I_f$ to denote the ideal of $\Bbbk\lr{X}$ of the form
$\{\sum_{i=1}^m g_if(u_{i1}, \cdots, u_{in})h_i\mid g_i, 
h_i, u_{ij} \in \Bbbk\lr{X}, i=1, \cdots, m, j=1, \cdots, n\}$.

\begin{lem}
\label{xxlem2.8}
Let $W$ be a set of multilinear polynomials $\{f\}$ where
each $f$ is a multilinear polynomial of degree $n$ in
$\Bbbk\lr{x_1, \cdots, x_n}$ for some $n$. Let $\lr{W}_T$ be
the T-ideal of $\Bbbk\lr{X}$ generated by $W$ as a
T-ideal. Then $\lr{W}_T=\sum_{f\in W} I_f$.
\end{lem}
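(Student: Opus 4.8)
The plan is to prove the two inclusions separately, and the main structural point is to recognize that each $I_f$ is \emph{already} a T-ideal, so that $\sum_{f\in W} I_f$ is a T-ideal. First I would observe that $I_f$ is precisely the two-sided ideal of $\Bbbk\lr{X}$ generated by the set of all substitution instances $\{f(u_1, \cdots, u_n)\mid u_1, \cdots, u_n\in \Bbbk\lr{X}\}$, since a general element of such an ideal is a finite sum $\sum_i g_i f(u_{i1}, \cdots, u_{in}) h_i$, which is exactly the shape prescribed in the definition of $I_f$.

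For the inclusion $\sum_{f\in W} I_f\subseteq \lr{W}_T$, fix $f\in W$ of degree $n$. Each substitution instance $f(u_1, \cdots, u_n)$ is the image of $f=f(x_1, \cdots, x_n)$ under the endomorphism $\varphi$ of $\Bbbk\lr{X}$ determined by $x_j\mapsto u_j$ for $1\le j\le n$ (and, say, $x_k\mapsto x_k$ otherwise). Since $\lr{W}_T$ is a T-ideal containing $f$, it is stable under $\varphi$, so $f(u_1, \cdots, u_n)\in \lr{W}_T$; being an ideal, $g\,f(u_1, \cdots, u_n)\,h\in \lr{W}_T$ for all $g,h$, whence $I_f\subseteq \lr{W}_T$. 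Summing over $f$ gives $\sum_f I_f\subseteq \lr{W}_T$.

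For the reverse inclusion it suffices to check that $\sum_{f\in W} I_f$ is a T-ideal containing $W$, because $\lr{W}_T$ is by definition the smallest such. Containment of $W$ is immediate: taking a single summand with $g=h=1$ and $u_j=x_j$ shows $f=f(x_1, \cdots, x_n)\in I_f$. The sum of ideals is an ideal, so it remains to verify T-stability, and for this I would show each $I_f$ is individually stable under every endomorphism $\varphi$. Applying $\varphi$ to a generic element and using that $\varphi$ is an algebra homomorphism,
\[
\varphi\Big(\sum_i g_i\, f(u_{i1}, \cdots, u_{in})\, h_i\Big)
=\sum_i \varphi(g_i)\,\varphi\big(f(u_{i1}, \cdots, u_{in})\big)\,\varphi(h_i).
\]
The crux is the identity $\varphi\big(f(u_{i1}, \cdots, u_{in})\big)=f(\varphi(u_{i1}), \cdots, \varphi(u_{in}))$: since $f$ is a noncommutative polynomial, $f(u_{i1}, \cdots, u_{in})$ is assembled from the $u_{ij}$ by scalar multiples of products, and $\varphi$ respects sums and products, so it commutes with substitution. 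Thus each summand is $\varphi(g_i)\,f(\varphi(u_{i1}), \cdots, \varphi(u_{in}))\,\varphi(h_i)$, again a substitution instance of $f$ flanked by elements of $\Bbbk\lr{X}$, hence in $I_f$. Therefore $\varphi(I_f)\subseteq I_f$, each $I_f$ is a T-ideal, and so is their sum. Combining the two inclusions yields $\lr{W}_T=\sum_{f\in W} I_f$.

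The only genuine content is the substitution-commutes-with-homomorphism identity; everything else is formal manipulation of ideals. I do not expect multilinearity of the $f$ to play any essential role: it is a hypothesis carried along from the setup (and needed elsewhere, e.g. to connect with $\Phi_n$), but the argument above works for an arbitrary $f\in \Bbbk\lr{X}$. The main thing to be careful about is making the domain and codomain of the substitution endomorphism precise when $f$ involves only $x_1, \cdots, x_n$ while $\varphi$ ranges over endomorphisms of the whole free algebra on countably many variables.
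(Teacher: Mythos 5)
Your proof is correct and follows essentially the same route as the paper's: both arguments rest on showing that $I_f$ is itself a T-ideal via the identity $\varphi(f(u_1,\cdots,u_n))=f(\varphi(u_1),\cdots,\varphi(u_n))$, together with the easy inclusion $I_f\subseteq\lr{f}_T$. The only cosmetic difference is that the paper first reduces to the singleton case $W=\{f\}$, while you carry the sum over $W$ throughout and spell out (correctly) why substitution instances of $f$ lie in $\lr{W}_T$.
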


\begin{proof} It is easy to reduce to the case when $W$ is the singleton
$\{f\}$. In this case we need to show that $\lr{f}_T:=\lr{W}_T=I_f$.
Clearly, $gf(u_1, \cdots, u_n)h\in \lr{f}_T$ for all $g, h, u_i\in
\Bbbk\lr{X}$, $i=1, \cdots, n$, and therefore $I_f\subset \lr{f}_T$.
It suffices to show that $I_f$ is a T-ideal. It is easily seen that
$I_f$ is an ideal of $\Bbbk\lr{X}$. Suppose that $\varphi$ is an
arbitrary endomorphism of $\Bbbk\lr{X}$. Then for all
$g_i, h_i, u_{ij}\in \Bbbk\lr{X}, i=1, \cdots, m, j=1, \cdots, n$,
we have
\[\varphi(\sum_{i=1}^m g_if(u_{i1}, \cdots, u_{in})h_i)
=\sum_{i=1}^m\varphi(g_i)f(\varphi(u_{i1}), \cdots,
\varphi(u_{in}))\varphi(h_i)\in I_f.\]
It follows that $I_f$ is a T-ideal and $\lr{f}_T=I_f$.
\end{proof}

Here is an intermediate step.

\begin{thm}
\label{xxthm2.9}
The pair $(\Psi,\Omega)$ defined in Notation \ref{xxnot2.5}
induce an inclusion-preserving one-to-one correspondence between
the set of proper T-ideals of $\Bbbk\lr{X}$ and the set of proper
operadic ideals of $\As$.
\end{thm}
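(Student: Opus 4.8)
The plan is to prove that $\Psi$ and $\Omega$ are mutually inverse, inclusion-preserving bijections, so the bulk of the work reduces to the two identities $\Omega(\Psi(J))=J$ and $\Psi(\Omega(\I))=\I$. Inclusion-preservation is immediate: if $J_1\subseteq J_2$ then $\Bbbk\lr{X}/J_2$ is a quotient of $\Bbbk\lr{X}/J_1$ and hence satisfies more identities, giving $V_n(\Bbbk\lr{X}/J_1)\subseteq V_n(\Bbbk\lr{X}/J_2)$ and $\Psi(J_1)\subseteq\Psi(J_2)$; and if $\I_1\subseteq\I_2$ then $\cup_n\Phi_n(\I_1(n))\subseteq\cup_n\Phi_n(\I_2(n))$, whence $\Omega(\I_1)\subseteq\Omega(\I_2)$. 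The first reduction I would record is that for a T-ideal $J$ one has $\Psi(J)(n)=\Phi_n^{-1}(J\cap V_n)$: the multilinear degree-$n$ identities of the relatively free algebra $\Bbbk\lr{X}/J$ are exactly $J\cap V_n$, since a multilinear $p\in V_n$ vanishes on the generic elements $\bar x_1,\dots,\bar x_n$ precisely when $p\in J$, and for a T-ideal this is equivalent to $p$ being an identity. Dually, $\Phi_n(\Psi(J)(n))=J\cap V_n$.

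For $\Omega\circ\Psi=\id$, I would use that in characteristic zero a T-ideal is generated, as a T-ideal, by its multilinear elements. Indeed, by Lemma \ref{xxlem2.6} every $f\in J$ is equivalent to a finite set of multilinear polynomials $f_1,\dots,f_w$; these are consequences of $f$, hence lie in the T-ideal $J$, while $f$ lies in the T-ideal they generate. Therefore $J=\lr{J\cap(\cup_n V_n)}_T$. Since $\Phi_n(\Psi(J)(n))=J\cap V_n$, the T-ideal $\Omega(\Psi(J))$ generated by $\cup_n\Phi_n(\Psi(J)(n))=\cup_n(J\cap V_n)$ is exactly $J$.

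The substantive direction is $\Psi\circ\Omega=\id$, which by the reduction above amounts to $\Omega(\I)\cap V_n=\Phi_n(\I(n))$ for every $n$. The inclusion $\Phi_n(\I(n))\subseteq\Omega(\I)\cap V_n$ is clear. For the reverse inclusion I would first invoke Lemma \ref{xxlem2.8} to write $\Omega(\I)=\sum_{f\in W}I_f$ with $W=\cup_m\Phi_m(\I(m))$, so that every element of $\Omega(\I)$ is a finite sum of terms $g\,f(u_1,\dots,u_m)\,h$ with $f=\Phi_m(\theta)$ and $\theta\in\I(m)$. Expanding $g,h,u_1,\dots,u_m$ into monomials and using the multidegree grading of $\Bbbk\lr{X}$ (each resulting term is multi-homogeneous, since the permutation sum inside $f$ only reorders the $u_j$), the projection onto the multidegree $(1,\dots,1,0,\dots)$ component shows that any element of $\Omega(\I)\cap V_n$ is a $\Bbbk$-linear combination of monomial generators $w=g'f(u'_1,\dots,u'_m)h'$ lying in $V_n$. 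The key step is the substitution-composition dictionary: if $g'=\Phi_a(\alpha)$, $u'_j=\Phi_{d_j}(\rho_j)$ and $h'=\Phi_b(\beta)$ on disjoint sets of variables with $a+\sum_j d_j+b=n$, then the operad-morphism property of $\gamma\colon\As\to\mathcal{E}nd_{\Bbbk\lr{X}}$ gives $w=\Phi_n(\Theta)$ up to a right $\S_n$-relabelling, where $\Theta=1_3\circ(\alpha,\,\theta\circ(\rho_1,\dots,\rho_m),\,\beta)$ is assembled from $\alpha,\theta,\beta$ by operadic composition. Since $\I$ is an operadic ideal it is closed under composition into either argument, so $\theta\in\I(m)$ forces $\Theta\in\I(n)$; as $\I(n)$ is a right $\S_n$-submodule, $w\in\Phi_n(\I(n))$, and hence $\Omega(\I)\cap V_n\subseteq\Phi_n(\I(n))$.

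Finally, to match the word \emph{proper}, I would note that $J\neq\Bbbk\lr{X}\iff\Bbbk\lr{X}/J\neq 0\iff$ there is no nonzero constant identity $\iff\Psi(J)(0)=0\iff\Psi(J)\neq\As$ (here $1_0\in\I(0)$ forces $\I=\As$ via $1_2\ucr{1}1_0=1_1$), together with the symmetric statement for $\Omega$; so the correspondence restricts to proper ideals on both sides. The main obstacle is the reverse inclusion in $\Psi\circ\Omega=\id$: one must set up the dictionary translating multiplication and substitution of polynomials into partial operadic composition precisely enough that multilinear consequences of $\theta\in\I$ are recognised as genuine elements of $\I(n)$, and the reduction to multi-homogeneous monomial generators via the multidegree grading is what makes this bookkeeping tractable.
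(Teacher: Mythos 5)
Your proof is correct and its core coincides with the paper's: one direction rests on multilinearization, the other on the dictionary converting a substitution instance $g\,f(u_1,\dots,u_m)\,h$ of a multilinear $f=\Phi_m(\theta)$ into the operadic composite $1_3\circ(\,\cdot\,,\theta\circ(\rho_1,\dots,\rho_m),\,\cdot\,)$, which lies in $\I$ because $\I$ is an operadic ideal --- this is exactly the displayed computation in the paper's proof of Theorem \ref{xxthm2.9}. You diverge in two places, both defensibly. First, for $\Omega(\Psi(J))=J$ the paper invokes Corollary \ref{xxcor2.7}, hence Kemer's theorem, to produce a \emph{finite} multilinear generating set, whereas you use only Lemma \ref{xxlem2.6} to see that $J$ is generated as a T-ideal by $\cup_n(J\cap V_n)$; finiteness plays no role in this step, so your version makes the correspondence itself independent of Kemer's theorem, which is cleaner given that Theorem \ref{xxthm2.12} later feeds Kemer's theorem through this very correspondence. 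Second, you explicitly verify that properness is preserved on both sides (via $\I\neq\As\iff\I(0)=0$ and $1_2\ucr{1}1_0=1_1$), a point the paper passes over in silence. Your intermediate identity $\Phi_n(\Psi(J)(n))=J\cap V_n$ and the reduction of $\Psi(\Omega(\I))=\I$ to $\Omega(\I)\cap V_n=\Phi_n(\I(n))$ are tidy reformulations of steps the paper carries out inline, and your multidegree-projection argument is the precise justification of the paper's ``we may further assume that each term is multilinear'' step.
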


\begin{proof}
By Lemmas \ref{xxlem2.3} and \ref{xxlem2.4},
we have inclusion-preserving maps
$$\Psi: \quad 
\{{\text{proper T-ideals of $\Bbbk\lr{X}$}}\}
\to \{{\text{proper operadic ideals of $\As$}}\}$$
and
$$\Omega: \quad 
\{{\text{proper operadic ideals of $\As$}}\}
\to
\{{\text{proper T-ideals of $\Bbbk\lr{X}$}}\}.$$

First we prove that $\Omega(\Psi(J))=J$ if $J$ is a proper
T-ideal of $\Bbbk\lr{X}$. By Corollary \ref{xxcor2.7},
$J$ is generated by finitely many multilinear polynomials,
say $\{f_1,\cdots, f_s\}$, as a T-ideal. Let $A:=\Bbbk\lr{X}/J$.
Since $J$ is a T-ideal, every element in $J$ is an identity of
$A$. In particular, $f_i\in V_n(A)$ if $f_i$ has degree $n$. By
Lemma \ref{xxlem2.3}, $\Phi_n^{-1}(f_i)\in \I(n)$ where
$\I=\Psi(J)$. By Lemma \ref{xxlem2.4}, $f_i\in \Omega(\I)$.
Consequently, $J\subseteq \Omega(\I)=\Omega(\Psi(J))$. Conversely,
let $J'=\Omega(\Psi(J))$, we need to show that $J'\subseteq J$.
Since both $J$ and $J'$ are T-ideals, by Lemma \ref{xxlem2.4},
it suffices to show that $J_n\subseteq J$ where $J_n$ is defined as
in \eqref{E2.4.1}. Again, by the fact that $J$ is a T-ideal, it remains
to show that $\Phi_n(\theta) \in J$ for all $\theta\in \I(:=\Psi(J))$.
By the definition of $\I$ in Lemma \ref{xxlem2.3},
$\Phi_n(\theta) \in V_n(A)$ where $A$ is $\Bbbk\lr{X}/J$. This
implies that $\Phi_n(\theta)$ is an identity of $A$. Therefore
$\Phi_n(\theta) \in J$ as required.

Next we show that $\Psi(\Omega(\I))=\I$ for any proper 
operadic ideal $\I$ of $\As$. Let $J:=\Omega(\I)$. By the 
proof of Lemma \ref{xxlem2.4}, $J$ is the 2-sided 
ideal of $\Bbbk\lr{X}$ generated by $\cup_{n\in \N} J_n$. 
Then, for every $\theta \in \I(n)$, 
$\Phi_n(\theta)=\Phi_n(\theta)(x_1,\cdots,x_n)$ is in 
$J_n\subseteq J$. Thus $\Phi_n(\theta)$ is in $V_n(\Bbbk\lr{X}/J)$.
By Lemma \ref{xxlem2.3}, $\theta\in \Psi(J)$.
This proves that $\I\subseteq \Psi(\Omega(\I))$. Conversely, let
$\I':=\Psi(\Omega(\I))$, we need to show that
$\I'\subseteq \I$. Let $\theta$ be in $\I'$.  By definition,
$\theta=\Phi^{-1}_n(f_{\theta})$ where $f_{\theta}$ is some 
multilinear identity of $\Bbbk\lr{X}/J$, or equivalently, 
$f_{\theta}$ is multilinear and $f_{\theta}\in J$ as $J$ is 
a T-ideal. By the proof of Lemma \ref{xxlem2.4}, there 
exist $g_{ki}, h_{ki}, u_{ki,1},\cdots, u_{ki,n_k} \in \Bbbk\lr{X}$ 
such that
\[f_\theta=\sum_{k,i} g_{ki}f_k(u_{ki,1}, \cdots, 
u_{ki, n_{k}})h_{ki}\]
where $f_k=\Phi_{n_k}(\theta_k)$ for some $\theta_k\in \I(n_k)$
(and the above sum is a finite sum). Let $l$ denote $n_k$. 
Without loss of generality, we may assume that $g_{ki}, h_{ki}, 
u_{ki,1},\cdots u_{ki,l}$ are monomials. Since $f_\theta$ is a 
multilinear polynomial in $\Bbbk\lr{x_1, \cdots, x_n}$, we may 
further assume that each term $g_{ki}f_k(u_{ki,1}, \cdots, 
u_{ki,l})h_{ki}$ is a multilinear polynomial in the 
indeterminants $x_1, \cdots, x_n$, and for each pair $(k,i)$, 
$g_{ki}, h_{ki}, u_{ki,1},\cdots, u_{ki,l}$ are in different 
sets of indeterminants. We need only show that
$\Phi^{-1}_n(g f_k(u_{1}, \cdots, u_{l})h)\in \I(n)$, where
$(g, h, u_1, \cdots, u_{l})$ are sets of monomials of the 
form $(g_{ki}, h_{ki}, u_{ki,1},\cdots, u_{ki,l})$, can be 
generated by $\theta_k\in \I(l)$ for all $k$. We denote
$$\begin{aligned}
g&=ax_{\sigma^{-1}(1)}\cdots x_{\sigma^{-1}(r)}, \\
u_{1}&=b_{1}x_{\sigma^{-1}(r+1)}\cdots x_{\sigma^{-1}(r+s_1)},\\
& \cdots, \\
u_{l}&=b_{l}x_{\sigma^{-1}(r+s_1+\cdots+s_{l-1}+1)}\cdots
x_{\sigma^{-1}(r+s_1\cdots+s_l)},\\
h&=cx_{\sigma^{-1}(r+s_1\cdots+s_l+1)}\cdots x_{\sigma^{-1}
(r+s_1\cdots+s_l+t)}
\end{aligned}
$$ for some $\sigma\in \S_n$ and some nonzero scalars 
$a, b_1,\cdots, b_l, c$ in $\Bbbk$. Clearly, $r+s_1+\cdots+s_l+t=n$.
Then
\begin{align*}
\Phi_n^{-1}(g f_k(u_{1}, \cdots, u_{l})h)=ab_1\cdots b_l
c(1_3\circ (1_r, \theta_k\circ (1_{s_1}, \cdots, 1_{s_l}), 1_t))
\ast \sigma\in \lr{\theta_k}(n),
\end{align*}
where $\lr{\theta_k}$ is the operadic ideal of $\ias$
generated by $\theta_k$. It follows that
$\theta\in \sum_k \lr{\theta_k}(n)$
where each $\theta_k\in \I(n_k)$. Therefore $\theta\in \I(n)$
as required.
\end{proof}

\begin{rmk}
\label{xxrem2.10}
If a T-ideal $J$ is generated by multilinear identities 
$\{f_i\}_{i=1}^{s}$ as a T-ideal, then $\Psi(J)$ is generated 
by $\{\Phi^{-1}_{\deg f_i}(f_i)\}_{i=1}^s$ as an operadic
ideal of $\ias$. To see this, let $\I$ be $\Psi(J)$ and $\I'$ be the
operadic ideal of $\ias$ generated by 
$\{\Phi^{-1}_{\deg f_i}(f_i)\}_{i=1}^s$. It follows from 
Lemma \ref{xxlem2.3} that $\I'\subseteq \I$. By 
Theorem \ref{xxthm2.9}, $\Omega(\I')\subseteq \Omega(\I)=J$.
By Lemma \ref{xxlem2.4}, each $f_i$ is in 
$\Omega(\I')$. Thus $\Omega(\I')\supseteq J$. This forces
that $\I'=\I$ and consequently, $\I$ is generated by 
$\{\Phi^{-1}_{\deg f_i}(f_i)\}_{i=1}^s$ as an operadic
ideal of $\ias$.
\end{rmk}

Note that a T-ideal of the free nonunital associative 
algebra $\Bbbk\lr{X}_{+}$ can be defined similarly.
Similarly one can show the following.

\begin{thm}
\label{xxthm2.11}
There is an inclusion-preserving one-to-one correspondence 
between the set of proper T-ideals of $\Bbbk\lr{X}_{+}$ and 
the set of proper operadic ideals of $\nias$.
\end{thm}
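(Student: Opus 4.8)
The plan is to run the proof of Theorem~\ref{xxthm2.9} essentially verbatim, replacing $\ias$ by $\nias$ and $\Bbbk\lr{X}$ by $\Bbbk\lr{X}_{+}$ throughout, since nearly every ingredient has an evident non-unital analogue. First I would record that $\nias(n)=\Bbbk\S_n=\As(n)$ for all $n\ge 1$ while $\nias(0)=0$, so the module isomorphism $\Phi_n\colon \nias(n)\to V_n$ is unchanged for $n\ge 1$; since every multilinear polynomial of degree $n\ge 1$ already has no constant term, it lies in $\Bbbk\lr{X}_{+}$, and the subspaces $V_n$, $V_n(A)$ together with the map $\Phi_n$ make sense verbatim over $\Bbbk\lr{X}_{+}$. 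The composition maps of $\nias$ and $\ias$ agree on all arities $\ge 1$, so the non-unital analogues of Lemmas~\ref{xxlem2.3} and~\ref{xxlem2.4} go through without change and produce inclusion-preserving maps $\Psi$ and $\Omega$ between the proper T-ideals of $\Bbbk\lr{X}_{+}$ and the proper operadic ideals of $\nias$.

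Next I would check that Corollary~\ref{xxcor2.7} persists in the form: every proper T-ideal of $\Bbbk\lr{X}_{+}$ is generated, as a T-ideal, by finitely many multilinear polynomials. This follows from the multilinearization of Lemma~\ref{xxlem2.6}, which makes no use of a unit, together with Kemer's theorem (Theorem~\ref{xxthm2.2}), whose statement explicitly includes algebras \emph{without} unit. With this finiteness in hand, the two inclusions $\Omega(\Psi(J))=J$ and $\Psi(\Omega(\I))=\I$ are then established exactly as in the proof of Theorem~\ref{xxthm2.9}.

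The one genuine point of care — and the only place the unit enters the unital argument — is the final computation, where a substitution instance $g\,f_k(u_1,\cdots,u_l)\,h$ is rewritten as $(1_3\circ(1_r,\theta_k\circ(1_{s_1},\cdots,1_{s_l}),1_t))\ast\sigma$. Here the prefactors $1_r,1_t$ degenerate to the nullary operation $1_0\in\ias(0)$ when $g$ or $h$ is empty, and $\nias(0)=0$ contains no such element. The remedy is to note that in the non-unital setting the two-sided ideal of $\Bbbk\lr{X}_{+}$ generated by the instances $f_k(u_1,\cdots,u_l)$ is spanned not only by the products $g\,f_k(\cdots)\,h$ with $g,h\in\Bbbk\lr{X}_{+}$ but also by the one-sided products $g\,f_k(\cdots)$ and $f_k(\cdots)\,h$ and by the bare instances $f_k(\cdots)$; moreover each substituted $u_j\in\Bbbk\lr{X}_{+}$ has positive degree, so $s_j\ge 1$ automatically. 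Correspondingly, when $g$ and/or $h$ is absent one replaces $1_3\circ(1_r,-,1_t)$ by $1_2\circ(1_r,-)$, by $1_2\circ(-,1_t)$, or simply by $-$, each of which again lies in $\lr{\theta_k}(n)$. Hence the key conclusion $\theta\in\sum_k\lr{\theta_k}(n)\subseteq\I(n)$ survives, and the correspondence is established. The main obstacle is therefore only the bookkeeping of the degenerate cases $r=0$ and $t=0$, which in the unital proof were silently absorbed by the unit $1_0$.
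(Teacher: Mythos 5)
Your proposal is correct and is precisely the argument the paper intends: the paper's own ``proof'' of Theorem~\ref{xxthm2.11} consists of the single remark that the unital case adapts ``similarly.'' You correctly isolate the only genuine issue in that adaptation --- the element $1_3\circ(1_r,-,1_t)$ degenerates through $\nias(0)=0$ when $r=0$ or $t=0$ --- and resolve it properly by using the non-unital description of the two-sided ideal generated by the substitution instances (including one-sided and bare products) and by noting that every substituted $u_j\in\Bbbk\lr{X}_{+}$ has positive degree.
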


Kemer proved that every proper T-ideal of $\Bbbk\lr{X}$ is
finitely generated as a T-ideal in \cite{Ke1, Ke2}. Applying the
one-to-one correspondence between T-ideals of $\Bbbk\lr{X}$ and
operadic ideals of $\ias$ [Theorem \ref{xxthm2.9}], we have the
following consequences.

\begin{thm}
\label{xxthm2.12}
\begin{enumerate}
\item[(1)]
The operad\; $\ias$ is noetherian, that is, the set of
operadic ideals of\; $\ias$ satisfies the ascending chain
condition.
\item[(2)]
Every ideal of\; $\ias$ is finitely generated as an operadic
ideal.
\end{enumerate}
\end{thm}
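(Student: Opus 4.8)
The plan is to derive both parts from Kemer's theorem (Theorem \ref{xxthm2.2}) through the correspondence of Theorem \ref{xxthm2.9}, establishing the finite-generation statement (2) first and then extracting the chain condition (1) from it by a purely formal argument.

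For part (2), I would first dispose of the improper ideal: the whole operad is generated by the operadic unit, since $1_1 \ucr{1} \nu = \nu$ for every $\nu$, so $\ias = \lr{1_1}$ is generated by a single element. Now let $\I$ be a proper operadic ideal of $\ias$. By Theorem \ref{xxthm2.9} we have $\I = \Psi(\Omega(\I))$ with $J := \Omega(\I)$ a proper T-ideal of $\Bbbk\lr{X}$. By Corollary \ref{xxcor2.7} (which rests on Kemer's theorem together with Lemma \ref{xxlem2.6}), the T-ideal $J$ is generated, as a T-ideal, by a finite set $\{f_1, \cdots, f_s\}$ of multilinear polynomials. Remark \ref{xxrem2.10} then applies verbatim: it tells us that $\I = \Psi(J)$ is generated as an operadic ideal by the finite set $\{\Phi_{\deg f_i}^{-1}(f_i)\}_{i=1}^s$. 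This proves (2).

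For part (1), I would invoke the standard equivalence between finite generation and the ascending chain condition. Given an ascending chain $\I_1 \subseteq \I_2 \subseteq \cdots$ of operadic ideals, form the componentwise union $\I$ with $\I(n) = \bigcup_k \I_k(n)$. One checks that $\I$ is again an operadic ideal: each $\I(n)$ is a union of a chain of $\S_n$-submodules of $\ias(n)$, hence an $\S_n$-submodule, and closure under partial composition holds because the element of $\I$ entering any single composition $\mu \ucr{i} \nu$ already lies in some $\I_a$, and $\I_a$ is itself an ideal. By part (2) the ideal $\I$ is finitely generated, say by $\theta_1, \cdots, \theta_r$; each $\theta_j$ lies in some $\I_{k_j}$, so with $N := \max_j k_j$ all generators lie in $\I_N$, giving $\I = \lr{\theta_1, \cdots, \theta_r} \subseteq \I_N \subseteq \I$. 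Hence $\I_N = \I$ and the chain stabilizes, which is exactly the ascending chain condition.

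The essential mathematical input is Kemer's solution of the Specht problem, which we are entitled to assume; the remainder is formal. Accordingly, the only points requiring genuine care are the two boundary and bookkeeping issues I expect to be the main (and quite modest) obstacles: verifying that the componentwise union of a chain of operadic ideals is again an operadic ideal, and treating the improper ideal $\ias$ on its own, since the correspondence of Theorem \ref{xxthm2.9} is stated only for proper ideals.
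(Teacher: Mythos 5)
Your proposal is correct, and it rests on the same two pillars as the paper's proof (Kemer's theorem and the correspondence of Theorem \ref{xxthm2.9}), but it runs the internal logic in the opposite direction. The paper proves (1) first: Kemer's theorem gives the ascending chain condition for T-ideals of $\Bbbk\lr{X}$, and the inclusion-preserving bijection of Theorem \ref{xxthm2.9} transports that chain condition directly to operadic ideals of $\ias$; part (2) is then deduced from (1) by the standard noetherian argument. You instead prove (2) first, transporting the finite generating set itself: Corollary \ref{xxcor2.7} gives finitely many multilinear generators of $J=\Omega(\I)$, and Remark \ref{xxrem2.10} converts them into operadic generators of $\I=\Psi(J)$; you then deduce (1) from (2) via the union-of-a-chain argument. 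Each route has to verify one side of the same coin: the paper implicitly needs that a union of an ascending chain of T-ideals is a T-ideal, while you need the analogous fact for operadic ideals, which you check correctly. Your version has the small advantage of being more explicit, since it actually exhibits a finite generating set of $\I$ rather than obtaining finite generation non-constructively from the chain condition, and you are right to flag (as the paper does not) that Theorem \ref{xxthm2.9} only covers proper ideals, so the improper ideal $\ias=\lr{1_1}$ must be handled separately in (2).
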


\begin{proof}
(1) Since every T-ideal of $\Bbbk\lr{X}$ is finitely generated
as a T-ideal [Theorem \ref{xxthm2.2}], the set of T-ideals of
$\Bbbk\lr{X}$ satisfies the ascending chain condition.
The assertion follows from Theorem \ref{xxthm2.9}.

(2) This follows from part (1) by a standard noetherian argument.
\end{proof}

The following lemma is needed.

\begin{lem}\cite[Lemma 2.15(2)]{BYZ}
\label{xxlem2.13}
Let $\I$ be a finitely generated operadic ideal of $\ias$. Then 
there exists $\theta\in \I(n)$ for some $n\ge 0$ such that 
$\I=\lr{\theta}$.
\end{lem}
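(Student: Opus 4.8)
The plan is to fuse the finitely many generators into a single one, using the unital structure of $\ias$ in an essential way. Write $\I=\lr{\theta_1,\dots,\theta_s}$ with $\theta_i\in\I(n_i)$. I first dispose of the improper case: if some generator has nonzero coefficient sum, i.e. $\theta_i\circ(1_0,\dots,1_0)=c_i\,1_0$ with $c_i\neq 0$, then $1_0\in\I$, hence $1_1=1_2\ucr{2}1_0\in\I$, and $\I=\ias=\lr{1_0}$ is already principal. So I may assume $\I$ is proper. Then every generator satisfies
\[\theta_i\circ(1_0,\dots,1_0)=0,\]
since otherwise $1_0\in\I$ as above; invariantly, the coefficient-sum maps assemble into the augmentation $\ias\to\Com$, and $\I$ proper means $\I$ lies in its kernel. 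This vanishing is the fact the whole argument turns on.

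Next I would build the single generator. Set $N=\sum_{k=1}^s n_k$ and, for each $i$, let
\[\Theta_i=1_s\circ(1_{n_1},\dots,1_{n_{i-1}},\theta_i,1_{n_{i+1}},\dots,1_{n_s})\in\ias(N),\]
where $1_s\in\ias(s)$ is the $s$-fold ordered product and each $1_{n_k}\in\ias(n_k)$ is the order-preserving identity. Thus $\Theta_i$ is the product of $s$ consecutive blocks with $\theta_i$ placed in the $i$-th block and the trivial product used on the others. Put $\theta=\sum_{i=1}^s\Theta_i\in\I(N)$. Since each $\Theta_i$ is an operadic composite built from $\theta_i\in\I$, we have $\Theta_i\in\I$, so $\lr{\theta}\subseteq\I$ is immediate.

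It remains to recover each generator, that is, to prove $\theta_j\in\lr{\theta}$ for every $j$; this yields $\I\subseteq\lr{\theta}$ and hence $\I=\lr{\theta}$. For a fixed $j$, I would compose $\theta$ with $1_1$ in the slots of the $j$-th block and with the unit $1_0$ in every other slot: form $\theta\circ(\eta_1,\dots,\eta_N)$, where $\eta_a=1_1$ if $a$ lies in block $j$ and $\eta_a=1_0$ otherwise. This is an element of $\ias(n_j)$ lying in $\lr{\theta}$, and by associativity it splits blockwise. In the summand $\Theta_i$ with $i\neq j$, the $i$-th block contributes $\theta_i\circ(1_0,\dots,1_0)=0$, so $\Theta_i\circ(\eta_1,\dots,\eta_N)=0$; this is the crux, and it is exactly where properness---equivalently the unital structure of $\ias$---is used. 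In the surviving summand $\Theta_j$, the $j$-th block contributes $\theta_j\circ(1_1,\dots,1_1)=\theta_j$ while each other block contributes $1_{n_k}\circ(1_0,\dots,1_0)=1_0$, and the unit axioms $1_2\ucr{1}1_0=1_1=1_2\ucr{2}1_0$ absorb these units, collapsing $\Theta_j\circ(\eta_1,\dots,\eta_N)$ to $\theta_j$. Hence $\theta\circ(\eta_1,\dots,\eta_N)=\theta_j\in\lr{\theta}$, as wanted. The main obstacle is precisely this vanishing of the cross terms $\Theta_i\circ(\eta_1,\dots,\eta_N)$ for $i\neq j$: over the non-unital operad $\nias$ there is no unit $1_0$ with which to collapse the other blocks, consistent with the failure of the single-generator statement for $\nias$ recorded in Remark \ref{xxrem2.15}.
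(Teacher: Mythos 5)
Your argument is correct, but there is nothing in this paper to compare it against: the authors do not prove Lemma \ref{xxlem2.13} at all, they simply cite \cite[Lemma 2.15(2)]{BYZ}. What you have written is a valid self-contained proof of that cited fact in the special case $\ip=\ias$. The two pillars of your argument both check out. First, the dichotomy: if some generator has nonzero coefficient sum then $\theta_i\circ(1_0,\dots,1_0)=(\sum_\sigma c_\sigma)1_0$ puts $1_0$ in $\I$, whence $1_1=1_2\ucr{2}1_0\in\I$ and $\mu=\mu\ucr{1}1_1\in\I$ for every $\mu$, so $\I=\ias=\lr{1_0}$; otherwise all generators lie in the kernel of the augmentation $\ias\to\Com$ and $\I$ is proper. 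Second, the block construction: operadic associativity splits $\Theta_i\circ(\eta_1,\dots,\eta_N)$ blockwise, the cross terms die because $\theta_i\circ(1_0,\dots,1_0)=0$, and the explicit composition rule of $\ias$ gives $1_s\circ(1_0,\dots,1_0,\theta_j,1_0,\dots,1_0)=\theta_j$, so $\theta\circ(\eta_1,\dots,\eta_N)=\theta_j\in\lr{\theta}$. This is exactly the kind of $2$-unitary argument that \cite{BYZ} runs (the unit $1_0$ is what lets one splice generators of different arities into a single element and then retrieve them), and your closing observation that the mechanism breaks for $\nias$ for lack of $1_0$ is the right sanity check. Two cosmetic points: you should dispose of the trivial case $\I=0$ (take $\theta=0$) and of generators $\theta_i$ of arity $0$ (in the proper case these are necessarily $0$ and can be discarded), and the failure of single generation for $\nias$ is recorded in the introduction rather than in Remark \ref{xxrem2.15}, which only asserts noetherianity.
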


Now we are ready to prove Theorem \ref{xxthm1.2}.

\begin{proof}[Proof of Theorem \ref{xxthm1.2}]
(1) This is Theorem \ref{xxthm2.12}(1).

(2) This follows from Lemma \ref{xxlem2.13} and Theorem 
\ref{xxthm2.12}(2).
\end{proof}

In general the description of a T-ideal is very difficult even
if every proper T-ideal is finitely generated \cite{Ke1, Ke2}. 
In fact it is quite difficult to deduce the generators from
a given T-ideal. An effective way is to study the multilinear
polynomials in a T-ideal since every identity is equivalent to
a system of multilinear polynomials. Here is a small improvement 
of the original Kemer's theorem [Theorem \ref{xxthm2.2}] when 
we consider associative algebras with unit. 

\begin{cor}
\label{xxcor2.14}
Every proper T-ideal corresponding to an associative algebra
with unit is generated by one multilinear polynomial as
a T-ideal.
\end{cor}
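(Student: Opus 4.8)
The plan is to read Theorem \ref{xxthm1.2}(2) through the correspondence of Theorem \ref{xxthm2.9}: single generation of operadic ideals of $\As$ should translate into single generation of the matching T-ideals by one multilinear polynomial. Let $J$ be a proper T-ideal of $\Bbbk\lr{X}$, and write $A:=\Bbbk\lr{X}/J$, a unital associative algebra. Set $\I:=\Psi(J)$, which is a proper operadic ideal of $\As$ by Theorem \ref{xxthm2.9}.

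First I would apply Theorem \ref{xxthm1.2}(2) to produce a single element $\theta\in\I(n)$, for some $n$, with $\I=\lr{\theta}$. Let $f:=\Phi_n(\theta)$; since $\Phi_n\colon\As(n)\to V_n$ is an isomorphism onto the space of multilinear polynomials, $f$ is a multilinear polynomial of degree $n$. Because $\theta\in\I(n)=\Phi_n^{-1}(V_n(A))$, the polynomial $f=\Phi_n(\theta)$ lies in $V_n(A)$, i.e. $f$ is a multilinear identity of $A$; evaluating at the residue classes of the generators then shows $f\in J$. Consequently, if $J':=\lr{f}_T$ denotes the T-ideal generated by the single polynomial $f$, we get $J'\subseteq J\subsetneq\Bbbk\lr{X}$, so $J'$ is again a proper T-ideal and lies in the domain of $\Psi$.

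Next I would compute $\Psi(J')$. Since $J'$ is generated by the single multilinear identity $f$ as a T-ideal, Remark \ref{xxrem2.10} gives that $\Psi(J')$ is generated by $\Phi^{-1}_{\deg f}(f)=\theta$ as an operadic ideal; that is, $\Psi(J')=\lr{\theta}=\I=\Psi(J)$. Finally, because $\Psi$ is a bijection between proper T-ideals and proper operadic ideals [Theorem \ref{xxthm2.9}], the equality $\Psi(J')=\Psi(J)$ forces $J'=J$, so $J$ is generated by the one multilinear polynomial $f$, as claimed. The only point demanding care is the direction in which Remark \ref{xxrem2.10} is invoked: it is stated starting from a T-ideal already presented by multilinear generators, so I must apply it to the freshly built $J'=\lr{f}_T$ (not to $J$) and then transport the resulting equality of operadic ideals back to an equality of T-ideals via injectivity of $\Psi$. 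I do not expect a genuine obstacle here, since all the substantive work has been front-loaded into Theorem \ref{xxthm1.2}(2), whose proof via Lemma \ref{xxlem2.13} is where the unital structure of $\As$ is essential.
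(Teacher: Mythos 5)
Your proposal is correct and follows essentially the same route as the paper: extract a single operadic generator $\theta$ of $\I=\Psi(J)$ via Theorem \ref{xxthm1.2}(2), form the T-ideal $J'$ generated by $\Phi_n(\theta)$, show $\Psi(J')=\Psi(J)$, and conclude $J'=J$ by the injectivity in Theorem \ref{xxthm2.9}. The only cosmetic difference is that you invoke Remark \ref{xxrem2.10} to identify $\Psi(J')$ with $\lr{\theta}$, whereas the paper argues directly from the proofs of Lemmas \ref{xxlem2.3} and \ref{xxlem2.4}; both justifications are sound.
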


\begin{proof}
Let $J$ be a proper T-ideal of $\Bbbk\lr{X}$.
By Theorem \ref{xxthm2.9}, $J=\Omega(\I)$ where $\I$
is a proper operadic ideal of $\ias$. By part (2),
$\I$ is generated by an element $\theta\in \I(n)$.
Let $J'$ be the T-ideal of $\Bbbk\lr{X}$ generated
by $\Phi_n(\theta)$. It remains to show that $J=J'$.
By the proof of Lemma \ref{xxlem2.4}, $J'\subseteq J$. Since
$\Phi_n(\theta)$ is an identity of $\Bbbk\lr{X}/J'$,
it follows from the proof of Lemma \ref{xxlem2.3}
that
$$\theta\in \Psi(J') \subseteq \Psi(J)=\Psi(\Omega(\I))
=\I.$$
Since $\I$ is generated by $\theta$, we obtain that
$\I=\Psi(J')=\Psi(J)$. Now Theorem \ref{xxthm2.9}
implies that $J'=J$ as required.
\end{proof}

It is well-known that Corollary \ref{xxcor2.14} fails for algebras
without unit. Note that we do not provide a new proof of Kemer's 
theorem. Corollary \ref{xxcor2.14} is in the same spirit as a 
result of Razmyslov \cite{Ra} and Procesi \cite{Pr1} which states 
that all trace identities for the full matrix algebras are 
generated by a single trace identity, see also \cite{IKM}.

\begin{rmk}
\label{xxrem2.15}
Using Theorems \ref{xxthm2.11} and \ref{xxthm2.2},
one sees that the operad $\nias$ is also noetherian on 
operadic ideals.
\end{rmk}

\end{document}